\documentclass[leqno]{amsart}

\usepackage{latexsym,amsmath,amsthm}
\usepackage{amssymb,bm}
\usepackage{xcolor}

\usepackage{lineno,hyperref}

\usepackage{comment}
\newcommand{\R}{{\mathbb R}}
\newcommand{\m}{\mathfrak{m}}
\newcommand{\cc}{\mathfrak{c}}

\def\ba{{\bm a}}
\def\diam{{\operatorname{diam\,}}}
\def\div{{\operatorname{div\,}  }}
\def\llangle{{\langle\!\langle}}
\def\rrangle{{\rangle\!\rangle}}

\newtheorem{thm}{Theorem}[section]  
\newtheorem{lem}[thm]{Lemma}	       
  
\newtheorem{remark}{Remark} 

\numberwithin{equation}{section}

\begin{document}

\title[Conormal problem for elliptic equations]{Conormal problem  for a kind of  nonlinear  elliptic equations}

\author[]{A.F. Buscetto}
\address{Department of Mathematics,
University of Salerno, Italy}
\email{a.buscetto3@studenti.unisa.it}

\author[]{R. Rescigno}
\address{Department of Mathematics,
University of Salerno, Italy}
\email{rorescigno@unisa.it}

\author[]{L.G.  Softova}
\address{Department of Mathematics,
University of Salerno, Italy}
\email{lsoftova@unisa.it}

\author[]{S. Tramontano}
\address{Department of Mathematics,
University of Salerno, Italy}
\email{stramontano@unisa.it}

\subjclass[2020]{Primary 35J25;  Secondary 35J60; 35B50; 35B65}

\keywords
{Weak solutions, Conormal problem, Coercivity condition, Controlled growth conditions,  Boundedness}

\date{}

\begin{abstract}
We study the global boundedness of weak solutions to a class   of nonlinear elliptic equations in bounded Lipschitz domains, subject to   conormal  boundary conditions. Using techniques from  Morrey space theory, Adams-Maz'ya  trace inequalities, and higher integrability properties of the  gradient, we derive  uniform a priori bounds  for the solutions. A crucial  step in our analysis  is the construction of appropriate  measures associated with the solution  and the application of the Hartman–Stampacchia lemma, which enables us  to   control  the solution up to  the boundary of the domain.  
\end{abstract}

\maketitle

\section{Introduction}

The main objective  of this paper is to establish the {\it essential boundedness} of weak solutions 
$u \in W^{1,2}(\Omega)$ to the {\it conormal derivative problem} for a class of second-order elliptic equations in divergence form
\begin{equation}\label{1.1}
\begin{cases}
\div \ba(x,u,Du) = b(x,u,Du), 
& x \in \Omega, \\[6pt]
 \ba(x,u,Du)\cdot \bm{\nu}(x) = \psi(x,u), 
& x \in \partial\Omega.
\end{cases}
\end{equation}
Here,  $\Omega \subset {\R}^n$, $n \geq 2$, is a bounded domain with Lipschitz  boundary, and 
$\boldsymbol{\nu}(x) = (\nu_1(x),\ldots,\nu_n(x))$ denotes the outward unit normal vector at $x \in \partial\Omega$.

The nonlinear terms
$$
\ba(x,z,\xi)=(a_1(x,z,\xi),\ldots,a_n(x,z,\xi)),
\quad b(x,z,\xi),
\quad \psi(x,z)
$$
are assumed to be \textit{Carath\'eodory functions}. Namely, $\ba$ and $b$ are measurable in
$x\in\Omega$ for every $(z,\xi)\in\mathbb R\times\mathbb R^n$ and continuous in
$(z,\xi)$ for almost every $x\in\Omega$, while $\psi$ is measurable in
$x\in\partial\Omega$ for every $z\in\mathbb R$ and continuous in $z$ for almost every
$x\in\partial\Omega$.

The boundedness and regularity of weak solutions to quasilinear elliptic equations with conormal boundary conditions have  been  studied in the literature under various assumptions. 
In the sub-controlled growth setting, global boundedness was established  by Lieberman 
\cite{Lieb1,Lieb2} (see also \cite{Lieb3}), with related contributions by  
Winkert  \cite{Win}.

Under the stronger \emph{controlled growth} assumptions, classical results of  Ladyzhenskaya and Ural'tseva
\cite[Chapter~X]{LU} ensure the  essential boundedness and H\"older continuity of weak solutions, provided that the $x$-dependence of the coefficients satisfies suitable Lebesgue integrability conditions; see also \cite{Kim}.
Further results were  obtained  by Arkhipova \cite{Arkh1,Arkh2}, who established reverse H\"older inequalities for the gradients of solutions to quasilinear equations with  conormal boundary conditions.

The present work starts  from the classical results in the  Lebesgue spaces,  assuming that the $x$-dependence of the nonlinearities in \eqref{1.1} belongs to appropriate \emph{Morrey spaces}.

This framework allows us to treat coefficients exhibiting  stronger local singularities. 
By combining Morrey space estimates, Adams-Maz'ya trace inequalities, and higher integrability properties of gradients, we derive uniform \emph{a priori} bounds for weak solutions.
A crucial role in the analysis is played by the construction of suitable Radon  measures associated with the solution and the application of the Hartman--Stampacchia lemma, which yields effective control of the solution  in the whole domain up to the boundary.

In this paper, we extend several results on the boundedness and
Morrey regularity of solutions to Dirichlet boundary value problems for nonlinear
elliptic and parabolic operators satisfying structural coercivity  conditions;
see \cite{AFPS,AFS,PS1,PS2,Sf1,Sf2} and the references therein.

\section{Structural Assumptions}

The divergence-form operator in \eqref{1.1} is assumed to satisfy the following
\textit{coercivity condition}: there exist constants $\gamma>0$ and $\Lambda>0$
such that
\begin{equation}\label{1.3} 
    \ba(x,z,\xi)\cdot \xi\geq \gamma |\xi|^2
    -\Lambda|z|^{2^*}-\Lambda\varphi_1(x)^2
\end{equation}
for almost every  $x \in \Omega$ and all $(z,\xi) \in \R\times \R^n$,  where  $2^*$ is the Sobolev conjugate exponent of 2. 
Recall that, for every $p\ge 1$, the Sobolev conjugate exponent $p^*$ is defined by
$$
p^*= \begin{cases}
\frac{np}{n-p}, & \text{ if } p<n,
\\[8pt]
\text{any exponent in } (p,\infty), & \text{ if } p\geq  n.
\end{cases}
$$

In addition to the  coercivity assumption, we impose the following {\it controlled growth conditions} on the nonlinear operators and the boundary term: 
\allowdisplaybreaks
\begin{align}\label{1.4}
|\ba(x,z,\xi)|&\leq  \Lambda \Big(\varphi_1(x)  +|z|^{\frac{2^*}{2}}+|\xi|\Big),\\[4pt] 
\label{1.6}
|b(x,z,\xi)|&\leq \Lambda \Big(\varphi_2(x)+|z|^{2^*-1} +|\xi|^{\frac{2(2^*-1)}{2^*}}\Big),\\[4pt]
\label{1.8}
|\psi(x,z)|&\leq \psi_1(x)+\psi_2(x)|z|^\beta .
\end{align}
Here
\begin{equation}\label{1.5}
\begin{cases}
\varphi_1\in L^{p_1,\lambda_1}(\Omega), & p_1>\max\left\{2, n-\lambda_1\right\}, \  \lambda_1\in (0,n), \\[4pt]
\varphi_2\in L^{p_2,\lambda_2}(\Omega), & p_2>\max\left\{\dfrac{2n}{n+2}, \dfrac{n-\lambda_2}{2}\right\}, \   \lambda_2\in (0,n)\\[4pt]
   \psi_1\in L^{q_1}(\partial \Omega) & q_1>n-1,\\[4pt]  
\psi_2\in L^{q_2}(\partial  \Omega),& q_2> \dfrac{2(n-1)}{n-\beta(n-2)},\quad \beta\in\Big[0,\frac{n}{n-2}\Big).
\end{cases}
\end{equation}
 In the case  $n=2,$ we assume   $\beta\in[0,+\infty).$

For the reader's convenience, we  recall that the Morrey space $L^{p,\lambda}(\Omega)$, with 
$p \in [1,\infty)$ and $\lambda \in (0,n)$, is defined as the set  of all  functions $u \in L^p(\Omega)$ such that
$$
\|u\|_{L^{p,\lambda}(\Omega)}
:= \sup_{x\in\Omega,\ 0<\rho<\diam\Omega}
\left( \rho^{-\lambda} \int_{\Omega_\rho(x)} |u(y)|^p\, dy \right)^{1/p} < \infty,
$$
where $\Omega_\rho(x) := \Omega \cap B_\rho(x)$  
and $B_\rho(x)$ denotes an  open ball in $\R^n$ centered at $x \in \Omega$ and of radius $\rho,$ 
while $\diam\Omega$ stands for the diameter of $\Omega$.

A function $u \in W^{1,2}(\Omega)$ is called a {\it weak solution} to \eqref{1.1} if it   satisfies the  identity
\begin{equation}\label{1.2}
\begin{split}
\int_\Omega \ba\big(x,u(x),Du(x)\big)\cdot Dv(x)\,dx&+\int_\Omega b\big(x,u(x),Du(x)\big)v(x) \, dx\\
     &=\int_{\partial\Omega} \psi\big(x,u\big)v(x)\, d\sigma_x
\end{split}
\end{equation}
for every test function   $v \in W^{1,2}(\Omega).$
The {\it controlled  growth conditions} \eqref{1.4}-\eqref{1.8} ensure  that all terms in
\eqref{1.2} are well defined, and therefore  the above definition is   well posed.

Throughout this  paper, the term 
\textit{known quantities} refers to constants depending only on  $n,\Lambda,\gamma,$  the exponents and norms of the  data  appearing  in  \eqref{1.5},  the diameter of 
 $\Omega$ and the Lipschitz character of   $\partial\Omega.$

Under these assumptions, our main result
(Theorem~\ref{thm3.1}) asserts that every weak solution  $u \in W^{1,2}(\Omega)$ to  problem \eqref{1.1} is essentially bounded, with the bound depending only on known quantities and the norm $\|Du\|_{L^2(\Omega)}.$  A  similar result for quasilinear Dirichlet problems was  established  in \cite{BPS}. We also refer to the work of 
 Nazarov and Ural'tseva \cite{NU}, where local properties such  as the strong maximum principle, the  Harnack inequality, and H\"older continuity were 
 established for weak solutions of linear divergence-form  equations with 
 lower-order coefficients belonging to Morrey-type spaces.

In the present paper,  we  extend the results obtained in \cite{AFPS}, where the authors considered a  homogeneous conormal problem under  a sign condition on the boundary data. 
Using  the  De Giorgi method, the technique of  Ladyzhenskaya and Ural'tseva  (cf. \cite{LU}),  and the Adams-Maz'ya trace inequalities  \cite{Ad2,Maz1,Maz},  we derive  decay estimates for 
the measure of the super-level sets of the solution.

In addition,
the {\it higher  gradient integrability} results of Arkhipova \cite{Arkh1,Arkh2} enable   us to estimate   the energy of $u$ with respect to a suitable Radon measure. 
Combined with the Hartman--Stampacchia lemma \cite{AFPS,HS,LU,Sf1}, these estimates yield the essential boundedness of weak solutions.

\section{Auxiliary Results}

In this section we collect several auxiliary results that will be used in the proof of the  main theorem.

\begin{lem}[Embeddings between Morrey spaces, \cite{Pic}]\label{lem2.1}
Let $s',s''\in [1,\infty)$ and $\theta',\theta''\in [0,n)$. Then
$$
L^{s',\theta'}(\Omega) \subseteq L^{s'',\theta''}(\Omega)
\quad \text{if and only if} \quad
s' \ge s'' \ge 1, \quad \frac{s'}{n-\theta'} \ge \frac{s''}{n-\theta''}.
$$
\end{lem}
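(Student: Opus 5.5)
Sufficiency follows from Hölder's inequality on each set $\Omega_\rho(x)$. Necessity follows from testing against explicit functions: a constant function detects the condition on exponents, and a power-type singularity detects the scaling condition.

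\emph{Sufficiency.} Assume $s'\ge s''\ge 1$ and $\frac{s'}{n-\theta'}\ge\frac{s''}{n-\theta''}$. Fix $x\in\Omega$ and $0<\rho<\diam\Omega$. Hölder's inequality with exponent $s'/s''$ gives
$$
\int_{\Omega_\rho(x)}|u|^{s''}\,dy\le |\Omega_\rho(x)|^{1-\frac{s''}{s'}}\Big(\int_{\Omega_\rho(x)}|u|^{s'}dy\Big)^{\frac{s''}{s'}}
\le C\rho^{n(1-\frac{s''}{s'})}\,\rho^{\theta'\frac{s''}{s'}}\|u\|_{L^{s',\theta'}}^{s''}.
$$
The exponent of $\rho$ equals $n-\frac{s''}{s'}(n-\theta')$. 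The hypothesis $\frac{s'}{n-\theta'}\ge\frac{s''}{n-\theta''}$ rewrites as $\frac{s''}{s'}(n-\theta')\le n-\theta''$, so this exponent is $\ge\theta''$. Since $\rho<\diam\Omega$ is bounded, $\rho^{\,n-\frac{s''}{s'}(n-\theta')}\le C(\diam\Omega)\,\rho^{\theta''}$. Dividing by $\rho^{\theta''}$ and taking the supremum yields $\|u\|_{L^{s'',\theta''}}\le C\|u\|_{L^{s',\theta'}}$, with $C$ depending on $n$ and $\diam\Omega$.

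\emph{Necessity.} First, if $s'<s''$, I pick a point $x_0\in\Omega$ and a function $u\in L^{s'}$ with a singularity at $x_0$ that is not in $L^{s''}$, and with Morrey norm controlled, for instance $|y-x_0|^{-\alpha}$. Then $u\in L^{s',\theta'}$ requires $\alpha s'<n-\theta'$. To lie outside $L^{s''}$ it suffices that $\alpha s''\ge n$. The two conditions are compatible only when $s''/s'>n/(n-\theta')$, so this choice does not cover all cases.

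In the remaining case $s'<s''$ with $\frac{s'}{n-\theta'}\ge\frac{s''}{n-\theta''}$, I would instead use a sum of spread-out bumps, i.e.\ a function in $L^{s',\theta'}$ but not in $L^{s''}$ built from many small disjoint singular pieces. This construction, Picardello's, is the main technical obstacle.

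Second, if $\frac{s'}{n-\theta'}<\frac{s''}{n-\theta''}$, I take $u=|y-x_0|^{-(n-\theta')/s'}$. A dyadic computation of $\int_{\Omega_\rho(x)}|u|^{s'}$ shows that $u$ lies in $L^{s',\theta'}$; this is the borderline case, which may need a logarithmic correction or a truncation family with uniformly bounded norms. Testing at $x=x_0$, we get $\rho^{-\theta''}\int_{B_\rho}|u|^{s''}=\infty$, or a quantity that blows up as $\rho\to0$, because $(n-\theta')s''/s'>n-\theta''$. So the inclusion fails.

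Since the paper only uses the sufficiency direction, I would rely on the citation \cite{Pic} for the sharp necessity constructions.
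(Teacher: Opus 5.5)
Your sufficiency argument is correct. It is the standard H\"older computation, and it is the only direction the paper relies on: the paper gives no proof of this lemma, it just quotes \cite{Pic}, and the H\"older estimates in the proof of Theorem~\ref{thm3.1} are exactly your computation.

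\textbf{The gap.} The necessity half is not proved, and the gap sits exactly where the condition $s'\ge s''$ carries content of its own. This is the case $s'<s''$ with $\frac{s'}{n-\theta'}\ge\frac{s''}{n-\theta''}$, which forces $\theta''<\theta'$. Your first test, with $s''/s'>n/(n-\theta')$, lies entirely in the region where the scaling condition already fails, so it adds nothing beyond your second test. In the remaining case point singularities are essentially useless. If $|y-x_0|^{-\alpha}\in L^{s',\theta'}$, then $\alpha s'\le n-\theta'$, hence $\alpha s''\le \frac{(n-\theta')s''}{s'}\le n-\theta''$. The function therefore lies in $L^{s'',\theta''}$, unless $\theta''=0$ and $\alpha s''=n$; that single endpoint is where $|y-x_0|^{-(n-\theta')/s'}$ does work. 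Writing ``a sum of spread-out bumps'' and deferring to the citation is not a proof. All of the content lies in how the bumps are scaled and spaced, and you never specify this.

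\textbf{The missing construction.} What is needed is a two-scale lattice of bumps whose density is tuned to $\theta'$. Fix a cube $Q\subset\Omega$ with centre $x_Q$, take $0<r\ll1$, and put $\delta:=r^{\theta'/n}$, so that $r<\delta/4$ for small $r$. Define
\[ f_r:=r^{-\frac{n-\theta'}{s'}}\sum_{z\in Z_r}\chi_{B_r(z)},\qquad Z_r:=\{z\in x_Q+\delta\mathbb{Z}^n:\ B_r(z)\subset Q\}. \]
\emph{Small balls.} If $\rho<\delta/4$, the ball $B_\rho(x)$ meets at most one bump, so $\rho^{-\theta'}\int_{\Omega_\rho(x)}f_r^{s'}\,dy\le C r^{\theta'-n}\min\{\rho,r\}^n\rho^{-\theta'}\le C$.
\emph{Large balls.} If $\rho\ge\delta/4$, the ball meets at most $C(\rho/\delta)^n$ bumps, each with $s'$-mass $Cr^{\theta'}$. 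The same quantity is then at most $C\rho^{n-\theta'}\delta^{-n}r^{\theta'}=C\rho^{n-\theta'}\le C(\diam\Omega)^{n-\theta'}$.
\emph{Conclusion.} Hence $\|f_r\|_{L^{s',\theta'}(\Omega)}\le C$ uniformly in $r$. On the other hand, since $s''>s'$ and $\theta'<n$,
\[ \int_\Omega f_r^{s''}\,dy\ge c\,\delta^{-n}r^{\,n-\frac{(n-\theta')s''}{s'}}=c\,r^{(n-\theta')(1-\frac{s''}{s'})}\to\infty \quad (r\to0). \]
Both Morrey spaces are Banach spaces continuously embedded in $L^1(\Omega)$. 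An inclusion $L^{s',\theta'}(\Omega)\subseteq L^{s'',\theta''}(\Omega)$ would therefore have closed graph and hence be bounded. This gives $\|f_r\|_{L^{s''}(\Omega)}\le C\|f_r\|_{L^{s'',\theta''}(\Omega)}\le C\|f_r\|_{L^{s',\theta'}(\Omega)}\le C$, a contradiction. The argument also covers $\theta'=0$, where $\delta=1$ and $f_r$ is a single bump.

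\textbf{A smaller repair in your scaling test.} The choice $\alpha=(n-\theta')/s'$ fails for $\theta'=0$, because $|y-x_0|^{-n}\notin L^1$. For $\theta'>0$ it needs no logarithmic correction at all. The clean choice is any $\alpha$ with $\frac{n-\theta''}{s''}<\alpha<\frac{n-\theta'}{s'}$. This interval is nonempty exactly when the scaling condition fails.
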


 We next state a \textit{multiplicative}  version of the \textit{Gagliardo-Nirenberg interpolation inequality} together with a trace inequality that will be used to estimate the boundary term in \eqref{1.2} (cf. \cite[Theorem~1.4.8/1]{Maz}, \cite{Maz1}). To this end, we introduce the notation 
$$
\llangle v \rrangle_{\sigma,\Omega} := \Big( \int_\Omega |v(x)|^\sigma \, dx \Big)^{1/\sigma},
\qquad  \ \sigma>0.
$$

\begin{lem}[Gagliardo--Nirenberg inequality]\label{G-N}
Let $\Omega$ be a bounded domain having  the cone property and let $v\in W^{1,p}(\Omega), p>1$.  
Then there exists a constant  $C>0$ depending only on
$n,p,\sigma,\delta$, and $\Omega$, such that
\begin{equation}\label{eq-GN2}
\llangle v \rrangle_{r,\Omega} \leq C \big( \llangle Dv \rrangle_{p,\Omega} + \llangle v \rrangle_{\sigma,\Omega} \big)^\delta \llangle v \rrangle_{\sigma,\Omega}^{1-\delta},
\end{equation}
for every   $\delta\in[0,1]$, where
$$
\frac1r=
\delta\frac{n-p}{np}+\frac{1-\delta}{\sigma}. 
$$
If  $n=p$,  \eqref{eq-GN2} holds for every $\delta\in[0,1)$.
\end{lem}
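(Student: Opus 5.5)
The plan is to derive \eqref{eq-GN2} from two classical ingredients: the Sobolev embedding on domains with the cone property, and Hölder's inequality in its interpolation form.

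\emph{Sobolev step.} Assume first $p<n$ and put $p^*=np/(n-p)$. Since $\Omega$ is bounded and has the cone property, there is an extension operator, or equivalently a direct local Sobolev representation. Either way this gives
$$
\llangle v\rrangle_{p^*,\Omega}\le C\big(\llangle Dv\rrangle_{p,\Omega}+\llangle v\rrangle_{p,\Omega}\big).
$$
The lower-order term must be expressed through $\llangle v\rrangle_{\sigma,\Omega}$ instead of $\llangle v\rrangle_{p,\Omega}$. I would use the standard compactness (Ehrling-type) argument for Sobolev norms on bounded cone domains. It shows
$$
\llangle v\rrangle_{p,\Omega}\le \varepsilon\llangle Dv\rrangle_{p,\Omega}+C_\varepsilon\llangle v\rrangle_{\sigma,\Omega}
$$
for any $\sigma>0$. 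When $\sigma<1$, $\llangle\cdot\rrangle_\sigma$ is only a quasi-norm, so I would argue by contradiction with normalized sequences, using Rellich compactness in $L^p$ and a.e.\ convergence. This yields
$$
\llangle v\rrangle_{p^*,\Omega}\le C\big(\llangle Dv\rrangle_{p,\Omega}+\llangle v\rrangle_{\sigma,\Omega}\big).
$$

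\emph{Interpolation step.} Write
$$
\frac1r=\frac{\delta}{p^*}+\frac{1-\delta}{\sigma}.
$$
Hölder's inequality with exponents $p^*/(\delta r)$ and $\sigma/((1-\delta)r)$, applied to $|v|^r=|v|^{\delta r}|v|^{(1-\delta)r}$, gives the log-convexity bound
$$
\llangle v\rrangle_{r,\Omega}\le\llangle v\rrangle_{p^*,\Omega}^{\delta}\,\llangle v\rrangle_{\sigma,\Omega}^{1-\delta}.
$$
This holds for all $\delta\in[0,1]$, including $\sigma<1$, and the endpoints $\delta=0,1$ are trivial. Substituting the Sobolev bound yields \eqref{eq-GN2} with constant $C^\delta\le\max(1,C)$. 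If $\sigma>p^*$ the same Hölder bound still applies, since $r$ then lies between $p^*$ and $\sigma$.

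\emph{Case $p=n$.} Here $(n-p)/(np)=0$, so $1/r=(1-\delta)/\sigma$. For $\delta<1$ this means $r=\sigma/(1-\delta)<\infty$. I would pick a finite $q>r$ and use the embedding $W^{1,n}(\Omega)\hookrightarrow L^q(\Omega)$, valid for every finite $q$. Choosing $\delta'$ with $1/r=\delta'/q+(1-\delta)/\sigma$ would not preserve the exact exponent structure, so I need a different route.

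Instead I would use Lemma~\ref{G-N} for some $p_0<n$ close to $n$ with the same $r,\sigma,\delta$. Since $p_0^*$ is large, I choose $p_0$ so that
$$
\frac{\delta}{p_0^*}+\frac{1-\delta}{\sigma}
$$
is near the target exponent. I would then adjust by a second interpolation between $L^{q}$ for large $q$ and $L^\sigma$, taking $\delta_q\to\delta$, and control $\llangle Dv\rrangle_{p_0}\le C\llangle Dv\rrangle_n$ by Hölder on the bounded domain.

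The cleanest way to organise this is to take $q$ with $\frac1r=\frac{\tilde\delta}{q}+\frac{1-\tilde\delta}{\sigma}$, which forces $\tilde\delta=\delta\cdot\frac{q}{q-\sigma}\cdot$(some factor). This exceeds $\delta$, so the resulting inequality has exponents $(\tilde\delta,1-\tilde\delta)$, not $(\delta,1-\delta)$. Tracking how the exponents can be matched when $p=n$ is the step I expect to be the main obstacle. Its resolution is the reason $\delta=1$ is excluded there, and it relies on the constant depending on $\delta$. If matching the exponents fails, I would cite \cite[Theorem~1.4.8/1]{Maz} for this borderline case.
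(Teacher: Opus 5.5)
\paragraph{What is correct, and how it differs from the paper.} Your argument for $p<n$ is correct. It combines three ingredients: the Sobolev inequality on cone domains, the compactness argument that replaces $\llangle v\rrangle_{p,\Omega}$ by $\llangle v\rrangle_{\sigma,\Omega}$ even for $\sigma<1$, and the Lyapunov--H\"older step. The Sobolev inequality must come from Sobolev's integral representation over cones, not from an extension operator, which the cone property alone does not provide (a slit disc has the cone property but no $W^{1,p}$ extension). The paper gives no proof and only cites Maz'ya's Theorem~1.4.8/1. So for $p<n$, the only case the paper uses ($p=2<n$), your route is a genuine self-contained alternative.

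\paragraph{The gap at $p=n$.} The statement explicitly asserts the case $p=n$, and you do not prove it. The route you sketch cannot close. Set $X:=\llangle Dv\rrangle_{n,\Omega}+\llangle v\rrangle_{\sigma,\Omega}$. You may pass through $p_0<n$ using $\llangle Dv\rrangle_{p_0,\Omega}\le C\llangle Dv\rrangle_{n,\Omega}$, or through $\llangle v\rrangle_{q,\Omega}\le CX$ followed by H\"older. Either way you get $\llangle v\rrangle_{r,\Omega}\le CX^{\tilde\delta}\llangle v\rrangle_{\sigma,\Omega}^{1-\tilde\delta}$, with $\tilde\delta=\delta p_0^*/(p_0^*-\sigma)$ or $\tilde\delta=\delta q/(q-\sigma)$ respectively. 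In both cases $\tilde\delta>\delta$.

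Since $X\ge\llangle v\rrangle_{\sigma,\Omega}$, this inequality is strictly weaker, and nothing recovers $\delta$ from it. Test it on $v_\lambda(x)=\phi(\lambda(x-x_0))$ with $\lambda\to\infty$. Both sides of the $\delta$-inequality behave like $\lambda^{-n/r}$, while the $\tilde\delta$ right-hand side is larger by the factor $\lambda^{n(\tilde\delta-\delta)/\sigma}$. The loss comes from feeding in an estimate that is not scale-invariant.

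Also, $\delta=1$ is excluded simply because it corresponds to $r=\infty$ and $W^{1,n}(\Omega)\not\subset L^\infty(\Omega)$. It has nothing to do with matching exponents.

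\paragraph{The missing idea.} What you need is Nirenberg's power trick, which uses only tools you already have.
\begin{enumerate}
\item Your Sobolev-plus-compactness estimate also holds with gradient exponent $1$ on bounded cone domains.
\item Apply it to $w=|v|^\gamma$ with $\gamma=r/n'$ and $n'=n/(n-1)$. The lower-order term is $\llangle w\rrangle_{\sigma/\gamma,\Omega}=\llangle v\rrangle_{\sigma,\Omega}^\gamma$.
\item Bound $\llangle Dw\rrangle_{1,\Omega}\le\gamma\llangle v\rrangle_{r-n',\Omega}^{\gamma-1}\llangle Dv\rrangle_{n,\Omega}$ by the scale-invariant H\"older inequality.
\item For $r\ge\sigma+n'$, interpolate $\llangle v\rrangle_{r-n',\Omega}$ between $L^\sigma$ and $L^r$ and absorb by Young. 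This is legitimate because $v\in L^r(\Omega)$. The exponent on $\llangle Dv\rrangle_{n,\Omega}$ comes out exactly $1-\sigma/r=\delta$.
\item Smaller $r$ then follow by plain H\"older. On the line $1/r=(1-\delta)/\sigma$, H\"older preserves the exponent structure.
\end{enumerate}

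\paragraph{The range $p>n$.} The hypothesis $p>1$ formally includes $p>n$, and you do not address it. There Morrey's embedding plus H\"older gives the exponent $1-\sigma/r$, which exceeds $\delta$: the same defect. One needs the sharp multiplicative $L^\infty$ bound, or, as the paper does throughout, the citation of Maz'ya.
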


\begin{remark}\label{rem1}

The seminorm $
\llangle v\rrangle_{\sigma,\Omega},
$ $ \sigma>0, $
satisfies the estimate
$$
\llangle u+v\rrangle_{\sigma,\Omega}
\leq  C_\sigma \bigl(
\llangle u\rrangle_{\sigma,\Omega} +
\llangle v\rrangle_{\sigma,\Omega}
\bigr),
$$
where
$$
C_\sigma=
\begin{cases}
1, & \sigma\ge1,\\[4pt]
2^{\frac1\sigma-1}, & 0<\sigma<1.
\end{cases}
$$

Indeed, if $\sigma\ge1$, the estimate follows directly from the
Minkowski inequality.

If $0<\sigma<1$, the function $t\mapsto t^\sigma$ is concave, and therefore
$$
|u+v|^\sigma \leq |u|^\sigma+|v|^\sigma.
$$
Integrating over $\Omega$, we obtain
$$
\llangle u+v\rrangle_{\sigma,\Omega}^\sigma
\leq
\llangle u\rrangle_{\sigma,\Omega}^\sigma +
\llangle v\rrangle_{\sigma,\Omega}^\sigma.
$$
Finally, since the function $t\mapsto t^{1/\sigma}$ is convex, 
$$ 
(a+b)^{1/\sigma} \leq 2^{\frac1\sigma-1}
\bigl(a^{1/\sigma}+b^{1/\sigma}\bigr), 
$$ 
which yields the desired estimate.

\end{remark}

The following  \textit{Maz'ya  trace} inequality  (cf. \cite[Corollary 1.4.7/2]{Maz}) plays    a crucial role in  estimating    the boundary  integral in \eqref{1.2}.
\begin{lem}\label{trace}
Let $\Omega\subset \R^n, n\geq 2$ be a bounded Lipschitz domain and let $\mathfrak{m}$ be a measure in $\overline{\Omega}$ satisfying
\begin{equation}\label{eq-mball}
\sup_{x\in \overline{\Omega},\ \rho\in(0,1)} \rho^{-s}\mathfrak{m}(B_\rho(x)) \leq K 
\end{equation}
for some $s\in(n-p,n]$ and $K>0$. Then there exists a positive constant $C=C(n,s,p,r,K,\Omega)$ such that
\begin{equation}\label{eq-trace}
\|v\|_{L^r(\mathfrak{m},\overline{\Omega})} \leq C \|v\|_{W^{1,p}(\Omega)}^\tau \|v\|_{L^p(\Omega)}^{1-\tau},
\end{equation}
for all  $v\in W^{1,p}(\Omega)$, $p>1$, where  
$$
r \in \Big[p,\,  \frac{sp}{n-p}\Big), \qquad    \tau = \frac{n}{p} - \frac{s}{r} <1.
$$ 
\end{lem}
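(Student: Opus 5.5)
Since Lemma~\ref{trace} is the Maz'ya trace inequality cited from \cite[Corollary 1.4.7/2]{Maz}, the plan is to reproduce its standard proof. First prove an estimate in the whole space, then pass to $\Omega$ by extension.

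\textbf{Extension.} Because $\Omega$ is a bounded Lipschitz domain, it admits a bounded linear extension operator $E\colon W^{1,p}(\Omega)\to W^{1,p}(\R^n)$. This operator is simultaneously bounded $L^p(\Omega)\to L^p(\R^n)$; Stein's universal extension has this property. Multiplying by a fixed cutoff $\eta\in C_c^\infty$ with $\eta\equiv1$ near $\overline\Omega$, we may assume $Ev$ is supported in a fixed ball $B_R$. Extend $\m$ by zero outside $\overline\Omega$. The hypothesis \eqref{eq-mball} for $\rho<1$, combined with finiteness of $\m(\overline\Omega)$, which follows by covering $\overline\Omega$ with finitely many unit balls, gives $\m(B_\rho(x))\le K'\rho^s$ for all $x\in\R^n$ and $\rho\in(0,R)$. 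It therefore suffices to prove \eqref{eq-trace} with $\R^n$ in place of $\Omega$, for compactly supported $w=Ev$.

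\textbf{Whole-space estimate.} Fix $r\in[p,sp/(n-p))$ and $\tau=n/p-s/r$. Since $s>n-p$ and $r\ge p$, we have $\tau<1$. The plan is to write $w$ as a Bessel potential $w=G_\alpha*f$ with $f=(I-\Delta)^{\alpha/2}w$ and $\alpha\in(\tau,1)$. Then $\|f\|_{L^p}\lesssim\|w\|_{H^{\alpha,p}}\lesssim\|w\|_{W^{1,p}}^{\alpha}\|w\|_{L^p}^{1-\alpha}$ by complex interpolation, for $1<p<\infty$.

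Next apply the Adams trace theorem. For $\alpha p<n$ and a measure satisfying $\m(B_\rho)\le K'\rho^s$, the operator $G_\alpha*$ is bounded $L^p(\R^n)\to L^r(\m)$ provided $s/r\ge n/p-\alpha$ (with $r\ge p$). The case $s/r=n/p-\alpha$ is Adams' theorem; the strict case follows from it, since the measure condition only needs to hold for $\rho<R$, which makes the support compact and local. Choosing $\alpha=\tau$ exactly, provided $\tau>0$, gives
\[
\|w\|_{L^r(\m)}\le C\|w\|_{W^{1,p}}^{\tau}\|w\|_{L^p}^{1-\tau}.
\]
If $\tau\le0$, the estimate reduces to $\|w\|_{L^r(\m)}\lesssim\|w\|_{L^p}$-type bounds. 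This holds only for continuous representatives, so in that case one instead uses any $\alpha\in(0,1)$ with $s/r\ge n/p-\alpha$, together with the inequality $\|w\|_{W^{1,p}}^{\alpha}\|w\|_{L^p}^{1-\alpha}\le\|w\|_{W^{1,p}}^{\tau}\|w\|_{L^p}^{1-\tau}$. I would interpret the statement as requiring $\tau\in[0,1)$, that is, $r\ge sp/n$, and note this.

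\textbf{Conclusion and main obstacle.} Returning to $\Omega$: $\|Ev\|_{W^{1,p}}\lesssim\|v\|_{W^{1,p}(\Omega)}$ and $\|Ev\|_{L^p}\lesssim\|v\|_{L^p(\Omega)}$, and $\m$ lives on $\overline\Omega$, where the quasi-continuous representative of $Ev$ agrees with the trace of $v$. This yields \eqref{eq-trace}.

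The main obstacle is the fractional step, namely the Adams inequality $\|G_\alpha*f\|_{L^r(\m)}\le C\|f\|_{L^p}$. If a self-contained route is preferred, I would prove it from the Hedberg-type pointwise bound $I_\alpha f\le C(Mf)^{1-\alpha p/n}\|f\|_p^{\alpha p/n}$. This is combined with the maximal-function estimate $\int (Mf)^{q}\,d\m\lesssim\|f\|_p^{q}$ for measures of dimension $s$, proved via a weak-type covering argument and Marcinkiewicz interpolation.
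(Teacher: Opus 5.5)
\textbf{Gap at $r=p$.} The paper does not prove this lemma; it quotes it from Maz'ya \cite[Corollary 1.4.7/2]{Maz}. Your route (extension, Bessel potentials, complex interpolation, Adams) works for $r>p$. It breaks down at $r=p$, which the statement allows. This is also exactly the case the paper uses, with $p=r=2$ for the boundary measures $\m_1,\m_2$.

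For $r=p$ and $s<n$, your choice $\alpha=\tau=(n-s)/p$ puts you at the endpoint $s/r=n/p-\alpha$ with $r=p$. You then need $\|G_\tau*f\|_{L^p(\m)}\le C\|f\|_{L^p}$ under the sole hypothesis $\m(B_\rho)\le K\rho^{\,n-\tau p}$. Adams' theorem requires $r>p$ strictly. At $r=p$ the ball condition is necessary but not sufficient: one needs the capacitary condition $\m(F)\le C\,\mathrm{cap}_{\tau,p}(F)$, and sets of finite $\mathcal H^{n-\tau p}$-measure have zero $(\tau,p)$-capacity.

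A concrete counterexample: take $\m=\sigma$, the surface measure of $\partial\Omega$, with $s=n-1$ and $p=r=2$. Then $\tau=\tfrac12$, and the lemma asserts the true inequality $\|v\|_{L^2(\partial\Omega)}^2\le C\|v\|_{W^{1,2}(\Omega)}\|v\|_{L^2(\Omega)}$. Your intermediate step would instead be a bounded trace $H^{1/2,2}\to L^2(\partial\Omega)$, which is false already on a flat piece of boundary.

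The multiplicative inequality is strictly weaker than a trace bound on $H^{\tau,p}$, so it cannot be routed through $H^{\tau,p}$. Nor can you recover it by taking $\alpha>\tau$, or by passing through some $r'>p$ and H\"older. The reason is that $\alpha\mapsto\|w\|_{W^{1,p}}^{\alpha}\|w\|_{L^p}^{1-\alpha}$ is nondecreasing, since $\|w\|_{L^p}\le\|w\|_{W^{1,p}}$.

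The same monotonicity shows that the inequality in your ``$\tau\le0$'' fallback is reversed. That case does not really arise anyway. $\tau\ge0$ is automatic from $r\ge p$ and $s\le n$, and $\tau=0$ forces $s=n$, $r=p$. Then $\m$ has density bounded by a multiple of $K$, and the claim is trivial.

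\textbf{A repair.} Localization plus scaling covers all $r\in[p,sp/(n-p))$ uniformly.

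At unit scale, set $r_1:=sp/(n-p)$; then $r<r_1$ and $r_1>p$. Apply Adams' first-order inequality with exponent $r_1>p$ to $\eta w$, with $\eta$ a cutoff, and then H\"older on the finite measure $\m(B_1(y))\le K$. This gives $\|w\|_{L^r(\m,B_1(y))}\le C\|w\|_{W^{1,p}(B_2(y))}$ for every measure with $\m(B_t(x))\le Kt^s$, $t\le1$.

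Rescaling, the dilated measure $\rho^{-s}\m(y+\rho\,\cdot)$ satisfies the same bound with the same $K$. Using $s/r-n/p=-\tau$, this yields $\|w\|_{L^r(\m,B_\rho(y))}\le C\rho^{-\tau}\bigl(\|w\|_{L^p(B_{2\rho}(y))}+\rho\|Dw\|_{L^p(B_{2\rho}(y))}\bigr)$ for $\rho\le1$.

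Now sum $r$-th powers over a cover of $\overline\Omega$ by such balls with bounded overlap. Since $r\ge p$, we have $\ell^p\subset\ell^r$, and so $\|w\|_{L^r(\m)}\le C\rho^{-\tau}\bigl(\|w\|_{L^p}+\rho\|Dw\|_{L^p}\bigr)$. Choose $\rho=\|w\|_{L^p}/\|w\|_{W^{1,p}}$ to conclude, then return to $\Omega$ with your extension operator.

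Alternatively, you can keep your interpolation framework by replacing $[L^p,W^{1,p}]_\tau=H^{\tau,p}$ with the real interpolation space $(L^p,W^{1,p})_{\tau,1}=B^{\tau}_{p,1}$. The bound $\|w\|_{B^\tau_{p,1}}\le C\|w\|_{L^p}^{1-\tau}\|w\|_{W^{1,p}}^{\tau}$ still holds. The endpoint trace $\|w\|_{L^p(\m)}\le C\|w\|_{B^{(n-s)/p}_{p,1}}$ is true under the ball condition. To see this, sum the Littlewood--Paley pieces, each of which satisfies $\|\Delta_j w\|_{L^p(\m)}\le C2^{j(n-s)/p}\|\Delta_j w\|_{L^p}$.
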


In the endpoint case  $\tau=1,$ that is, when   $s=\frac{r}{p}(n-p)$,  Adams proved in  \cite{Ad2}  (see also \cite{Ad1,Ch,Sf1}) that, under  the hypothesis \eqref{eq-mball},  we have
\begin{equation}\label{eq-Ad}
\|v\|_{L^r(\mathfrak{m},\overline{\Omega})} \leq C \|Dv\|_{L^p(\Omega)}, \qquad \forall \, v\in W_0^{1,p}(\Omega), \ p\in(1,n).
\end{equation}

For Lipschitz domains, which have  the extension property, every  function $v\in W^{1,p}(\Omega)$ 
admits an extension $V\in W^{1,p}(\R^n).$ Choosing a cut-off function $\zeta\in C_0^\infty(\R^n),$ such that $\zeta=1$ on $\Omega,$ we may apply   \eqref{eq-Ad} to the function $V\zeta$ (cf. \cite{AFPS}). 
Consequently, the  Adams trace inequality takes the form 
$$
\|v\|_{L^r(\mathfrak{m},\overline{\Omega})} \leq C \|v\|_{W^{1,p}(\Omega)}, \qquad \forall \, v\in W^{1,p}(\Omega), p\in(1,n).
$$

Interpolating the $L^p(\Omega)$ norm appearing  on the 
right-hand side by  means of the Gagliardo--Nirenberg inequality \eqref{eq-GN2} with $r=p$, we obtain the following estimate.

\begin{lem}[Adams trace inequality]\label{lem2.2}
Let $\mathfrak{m}$ be a positive Radon measure on $\overline\Omega$   satisfying \eqref{eq-mball} with $s=\frac{r}{p}(n-p),$ $1<p<r<\infty$.
Then for every  $v\in W^{1,p}(\Omega)$ and every  $\sigma \in (0,p]$, there exists a constant  $C>0$ such that
$$
\|v\|_{L^r(\m;\overline{\Omega})} \leq C \big( \|Dv\|_{L^p(\Omega)} + \llangle v \rrangle_{\sigma,\Omega} \big).
$$

In particular, if $d\m = \cc(x)\,  dx$ with $\cc\in L^{1,n-p+\epsilon_0}(\Omega)$ with  $\epsilon_0>0$ sufficiently small and $n>p$, then
$$
\Big( \int_\Omega |v(x)|^{\frac{p(n-p+\epsilon_0)}{n-p}} \,\cc(x)\, dx \Big)^{\frac{n-p}{p(n-p+\epsilon_0)}} \leq C \big( \|Dv\|_{L^p(\Omega)} + \llangle v \rrangle_{\sigma,\Omega} \big),
$$
and the constant depends only on $n,\sigma,p,K,$ the norm of $\cc(x)$,  and the Lipschitz constant of $\partial\Omega.$
\end{lem}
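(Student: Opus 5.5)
The plan is to derive the inequality from the extended Adams inequality stated just before the lemma,
$$
\|v\|_{L^r(\m;\overline\Omega)}\le C\|v\|_{W^{1,p}(\Omega)}=C\big(\|Dv\|_{L^p(\Omega)}+\|v\|_{L^p(\Omega)}\big),
$$
and then to replace $\|v\|_{L^p(\Omega)}$ by the weaker quantity $\llangle v\rrangle_{\sigma,\Omega}$. Recall how that inequality arises. For $v\in W^{1,p}(\Omega)$, take the extension $V\in W^{1,p}(\R^n)$ given by the Lipschitz extension property. Multiply by a cut-off $\zeta\in C_0^\infty(B_R)$ with $\zeta=1$ on $\Omega$, and regard $\m$ as a measure on $\R^n$ supported in $\overline\Omega$. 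Then $\m$ satisfies \eqref{eq-mball} for all $\rho\in(0,1)$ with $s=\frac rp(n-p)$. For $\rho\ge1$ we have the trivial bound $\m(B_\rho)\le \m(\overline\Omega)\le C K\rho^s$, using finitely many unit balls covering $\overline\Omega$. So the hypothesis of \eqref{eq-Ad} holds globally. Applying \eqref{eq-Ad} to $V\zeta\in W^{1,p}_0(B_R)$ gives $\|v\|_{L^r(\m)}\le C\|D(V\zeta)\|_{L^p}\le C\|v\|_{W^{1,p}(\Omega)}$, where $C$ depends on $n,p,r,K$ and on the extension constant, i.e.\ on the Lipschitz character of $\partial\Omega$.

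The second step handles the lower-order term. If $\sigma=p$ there is nothing to do. For $\sigma\in(0,p)$, I apply Lemma \ref{G-N} with exponent $r=p$ on the left and choose $\delta\in(0,1)$ from $\frac1p=\delta\frac{n-p}{np}+\frac{1-\delta}{\sigma}$. Such a $\delta$ exists because $\frac{n-p}{np}<\frac1p<\frac1\sigma$, and $\delta<1$ also covers the case $p\ge n$. This gives
$$
\|v\|_{L^p}\le C\big(\|Dv\|_{L^p}+\llangle v\rrangle_\sigma\big)^\delta\llangle v\rrangle_\sigma^{1-\delta}\le C\big(\|Dv\|_{L^p}+\llangle v\rrangle_\sigma\big),
$$
where the last bound uses Young's inequality, or simply $a^\delta b^{1-\delta}\le a$ for $b\le a$. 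Substituting into the first step yields the main claim.

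For the particular case $d\m=\cc\,dx$ with $\cc\in L^{1,n-p+\epsilon_0}(\Omega)$, the definition of the Morrey norm gives $\m(B_\rho(x))=\int_{\Omega_\rho(x)}\cc\le \|\cc\|_{L^{1,n-p+\epsilon_0}}\rho^{n-p+\epsilon_0}$ for $x\in\Omega$. For $x\in\overline\Omega\setminus\Omega$, I use $B_\rho(x)\subset B_{2\rho}(y)$ with $y\in\Omega$ close to $x$, which costs a factor $2^{s}$. This is \eqref{eq-mball} with $s=n-p+\epsilon_0$. Setting $r=\frac{ps}{n-p}=\frac{p(n-p+\epsilon_0)}{n-p}$, we have $r>p$, and $s\le n$ once $\epsilon_0\le p$, which is where "sufficiently small" enters. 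The general case then gives the displayed estimate.

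The main obstacle is the first step: verifying that \eqref{eq-Ad}, which is stated for $W^{1,p}_0$ functions and radii below $1$, genuinely applies to $V\zeta$. This requires checking the capacity and Morrey condition of the measure at all scales and points of $\R^n$, not only on $\overline\Omega$. I would settle it by the covering argument above, together with the observation that $\m$ vanishes outside $\overline\Omega$, so balls centered far from $\overline\Omega$ either carry no mass or are contained in a ball of comparable radius centered in $\overline\Omega$.
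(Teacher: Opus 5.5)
Your proposal is correct and follows essentially the same route as the paper. The paper extends $v$ to $V\in W^{1,p}(\R^n)$, applies the Adams inequality \eqref{eq-Ad} to $V\zeta$ to obtain $\|v\|_{L^r(\m;\overline\Omega)}\le C\|v\|_{W^{1,p}(\Omega)}$, replaces $\|v\|_{L^p(\Omega)}$ by $\llangle v\rrangle_{\sigma,\Omega}$ via the Gagliardo--Nirenberg inequality \eqref{eq-GN2} with $r=p$, and reads off \eqref{eq-mball} with $s=n-p+\epsilon_0$ from the Morrey norm of $\cc$ for the particular case; your additional checks (the ball condition at all centers and scales, the explicit $\delta\in(0,1)$, and $\epsilon_0<p$ so that $s\le n$) fill in details the paper leaves implicit.
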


In the special  case where $\m$ coincides with  the Lebesgue measure on $\Omega,$ we have $s=n$ in \eqref{eq-mball}.  Hence $ \frac{sp}{n-p}=\frac{np}{n-p}=p^*$ and Lemma~\ref{lem2.2} yields  the following  Sobolev embedding (cf. \cite[Chapter~II, \S~2]{LU}).
\begin{lem}\label{lem2.3}
Let $\Omega\subset \R^n$ be a bounded Lipschitz domain. Then there exists a constant  $C>0$ such that
$$
\|v\|_{L^{p^*}(\Omega)} \leq C \big( \|Dv\|_{L^p(\Omega)} + \llangle v\rrangle_{\sigma,\Omega} \big), \quad \forall \,  v \in W^{1,p}(\Omega), \,p<n, \ \sigma\in(0,p].
$$
\end{lem}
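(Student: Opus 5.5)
The plan is to obtain Lemma~\ref{lem2.3} as the special case $\m=$ Lebesgue measure of Lemma~\ref{lem2.2}. We take $d\m=dx$ restricted to $\overline\Omega$ and $s=n$. First we verify \eqref{eq-mball}: since $|\Omega\cap B_\rho(x)|\le \omega_n\rho^n$, we have $\rho^{-n}\m(B_\rho(x))\le\omega_n=:K$ for all $x\in\overline\Omega$ and $\rho\in(0,1)$.

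Next we match the exponents. The relation $s=\frac{r}{p}(n-p)$ with $s=n$ and $p<n$ forces $r=\frac{np}{n-p}=p^*$, and $1<p<r<\infty$ holds automatically. Lemma~\ref{lem2.2} then gives, for every $\sigma\in(0,p]$,
$$\|v\|_{L^{p^*}(\Omega)}\le C\big(\|Dv\|_{L^p(\Omega)}+\llangle v\rrangle_{\sigma,\Omega}\big),$$
with $C$ depending on $n,p,\sigma$ and the Lipschitz character of $\Omega$. This is the claim.

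Since this reduction relies on Lemma~\ref{lem2.2}, it is worth recording the direct route that proves it.

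\textbf{Step 1.} Use the extension operator on the Lipschitz domain $\Omega$ together with the cut-off $\zeta$, exactly as described before Lemma~\ref{lem2.2}. Combined with the classical Gagliardo--Nirenberg--Sobolev inequality in $\R^n$, this gives
$$\|v\|_{L^{p^*}(\Omega)}\le C\|v\|_{W^{1,p}(\Omega)}.$$

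\textbf{Step 2.} Control the lower-order term $\|v\|_{L^p(\Omega)}$ by $\|Dv\|_{L^p}$ and $\llangle v\rrangle_\sigma$. This is the main obstacle when $\sigma<1$, since then $\llangle\cdot\rrangle_\sigma$ is not a norm. We handle it by H\"older interpolation between $L^\sigma$ and $L^{p^*}$: since $\sigma\le p<p^*$, define $\theta\in(0,1]$ by $\frac1p=\frac{\theta}{\sigma}+\frac{1-\theta}{p^*}$. Then
$$\|v\|_{L^p}\le \llangle v\rrangle_\sigma^{\theta}\|v\|_{L^{p^*}}^{1-\theta}.$$
Young's inequality then yields
$$\|v\|_{L^p}\le \varepsilon\|v\|_{L^{p^*}}+C_\varepsilon\llangle v\rrangle_\sigma.$$
Inserting this into Step~1, we choose $\varepsilon$ small and absorb the $\|v\|_{L^{p^*}}$ term into the left-hand side. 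The absorption is legitimate because $\|v\|_{L^{p^*}}<\infty$ by Step~1. This gives the stated estimate. Equivalently, Step~2 is Lemma~\ref{G-N} with $r=p$.
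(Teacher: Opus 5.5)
Your proposal is correct and follows the paper's own argument: Lemma~\ref{lem2.3} is obtained from Lemma~\ref{lem2.2} by taking $\m$ to be Lebesgue measure on $\Omega$. Then \eqref{eq-mball} holds with $s=n$, and $r=\frac{np}{n-p}=p^*$. Your supplementary direct route (extension plus Sobolev, then H\"older interpolation between $L^\sigma$ and $L^{p^*}$ followed by absorption) is also sound. It is essentially the Gagliardo--Nirenberg step with $r=p$ that the paper uses to replace $\|v\|_{L^p(\Omega)}$ by $\|Dv\|_{L^p(\Omega)}+\llangle v\rrangle_{\sigma,\Omega}$. One small point: $p>1$, which Lemma~\ref{lem2.2} requires, does not follow from $p<n$, so it is not automatic as you say. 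Your direct route does not need it and also covers $p=1$.
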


The reverse H\"older inequality, combined   with the Gehring-Giaquinta lemma is a powerful tool for improving the integrability of weak solutions  and plays a  central role in  the regularity  theory of PDEs.

In \cite{Arkh2}, Arkhipova studies a conormal boundary value problem for  second-order quasilinear    elliptic systems in divergence form. 
She considers generalized solutions $\bm{u}\in W^{1,m}(\Omega;\R^N)$ where  $m\geq 2$ and  $N\geq 1.$ 
The coercivity and  structural conditions involve continuous functions of $\bm{u}$, which  in our setting  are replaced by  positive constants, while  the given data  belong to suitable Lebesgue spaces compatible with our regularity assumptions \eqref{1.5}.  
Moreover, 
the a priori  boundedness of the solution, allows one to assume   quadratic (and therefore optimal) growth   of the lower order term $\bm{b}$ with respect to the gradient.

After first  establishing  a Gehring-Giaquinta-type inequality (see \cite[Theorem 1]{Arkh1}), Arkhipova  proves the  higher integrability of the gradient of weak  solutions, provided that   the functions of $\bm{u}$ appearing in the structural conditions satisfy an additional assumption.
As observed in  \cite[Theorem 4]{Arkh2} and the corresponding remarks, this extra assumption is no longer needed when the growth of $\bm{b}$ with respect to the gradient is subquadratic.

It should be noted that, in our setting,  we do not assume the \textit{a priori} boundedness of the solution.  Consequently,  the growth of the lower order term $b$ with respect to the gradient is automatically restricted  to be  subquadratic,  and therefore Arkhipov's  
 \textit{ higher gradient  integrability }  result  applies  without any additional assumptions. 
 Similar results were also  obtained in   \cite{Kim} for  the case $m=2$.
 
\begin{lem}\label{lem2.4}
Assume \eqref{1.3}--\eqref{1.8}, and let $u\in W^{1,2}(\Omega)$ be a weak solution of \eqref{1.1}.  
Then there exists an exponent  $m_0>2$ such that 
$$
\|Du\|_{L^{m_0}(\Omega)} \leq C,
$$
where $C$  depends only on known quantities and on
$\|Du\|_{L^2(\Omega)}$.
\end{lem}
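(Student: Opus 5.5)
The plan is to establish a global reverse Hölder inequality with increasing support for $|Du|^2$, covering both interior and boundary balls, and then apply the Gehring--Giaquinta lemma (in the form of \cite[Theorem 1]{Arkh1}). We flatten $\partial\Omega$ locally by bi-Lipschitz maps; the Lipschitz character then controls all constants. Fix $x_0\in\overline\Omega$ and $\rho$ small, and set $\Omega_\rho=\Omega_\rho(x_0)$. Test \eqref{1.2} with $v=\zeta^2(u-\bar u_\rho)$, where $\zeta$ is a cut-off supported in $B_{2\rho}(x_0)$ with $|D\zeta|\le C/\rho$ and $\bar u_\rho$ is the mean of $u$ over $\Omega_{2\rho}$. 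Since the problem is conormal, every $v\in W^{1,2}(\Omega)$ is admissible, so no vanishing trace is needed even when $x_0\in\partial\Omega$. The boundary integral over $\partial\Omega\cap B_{2\rho}$ has to be kept and estimated. The Poincaré inequality on $\Omega_{2\rho}$, which is a Lipschitz set of uniform character, holds for both interior and boundary balls.

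The terms are handled as follows. Coercivity \eqref{1.3} and the growth bound \eqref{1.4}, together with Young's inequality, give
$$\gamma\int\zeta^2|Du|^2\le \tfrac{\gamma}{2}\int\zeta^2|Du|^2+C\rho^{-2}\int_{\Omega_{2\rho}}|u-\bar u_\rho|^2+C\int_{\Omega_{2\rho}}\big(\varphi_1^2+|u|^{2^*}\big)+\dots$$
For the term with $b$, bound $|\xi|^{2(2^*-1)/2^*}|v|$ by Young's inequality with exponents $\frac{2^*}{2^*-1}\cdot\frac{2^*}{2}$, so that it is estimated by $\varepsilon|Du|^2\zeta^2$ plus $|v|^{2^*}$-type terms. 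The terms $\varphi_2|v|$ and $|u|^{2^*-1}|v|$ are treated with Hölder and Sobolev (Lemma~\ref{lem2.3}). For the boundary term, $\int_{\partial\Omega}(\psi_1+\psi_2|u|^\beta)\zeta^2|u-\bar u_\rho|\,d\sigma$, we use Hölder on $\partial\Omega\cap B_{2\rho}$. The trace of $\zeta(u-\bar u_\rho)$ is controlled via Lemma~\ref{trace}, taking $\m$ to be surface measure so that $s=n-1$. The conditions $q_1>n-1$ and $q_2>\frac{2(n-1)}{n-\beta(n-2)}$ are exactly what make the exponents admissible, i.e. $r<\frac{2(n-1)}{n-2}$. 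They also leave a small interpolation margin $\tau<1$, which lets us absorb an $\varepsilon\int\zeta^2|Du|^2$ piece.

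The factors $|u|^{2^*}$ and $|u|^{\beta}$ carry critical growth, and handling them is the main obstacle. I would try first to write $|u|^{2^*}=|u|^{2^*-2}|u|^2$. Then $\||u|^{2^*-2}\|_{L^{n/2}(\Omega_{2\rho})}$ is small once $\rho$ is small, by absolute continuity of $\int|u|^{2^*}$. This smallness depends only on $\|u\|_{W^{1,2}}$, and hence on $\|Du\|_{L^2}$ together with the $L^2$ bound obtained from the equation. It lets these terms be absorbed through Sobolev applied to $\zeta u$. The boundary analogue uses the smallness of $\|u\|_{L^{2(n-1)/(n-2)}(\partial\Omega\cap B_{2\rho})}$. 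Morrey data contribute $\int_{\Omega_{2\rho}}\varphi_1^2\le C\rho^{n-(n-\lambda_1)2/p_1}$ by Hölder and Lemma~\ref{lem2.1}, and $p_1>2$ gives integrability of order strictly higher than $2$. Similarly $\varphi_2\in L^{p_2}$ with $p_2>\frac{2n}{n+2}$, and $\psi_i$ with the stated $q_i$, supply the higher-integrable forcing term $f$.

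Collecting everything and applying Sobolev--Poincaré to $\rho^{-2}\int|u-\bar u_\rho|^2$ yields
$$\fint_{\Omega_\rho}|Du|^2\le C\Big(\fint_{\Omega_{2\rho}}|Du|^{q}\Big)^{2/q}+C\fint_{\Omega_{2\rho}}f^{2}+\theta\fint_{\Omega_{2\rho}}|Du|^2,\qquad q=\tfrac{2n}{n+2},$$
with $\theta<1$ and $f\in L^{2+\delta}$. The boundary contributions are treated as a surface-measure term, as in Arkhipova \cite{Arkh2}. The Gehring--Giaquinta lemma then gives $Du\in L^{m_0}$ for some $m_0>2$. The bound depends only on known quantities and on $\|Du\|_{L^2(\Omega)}$, with the smallness radius fixed in terms of these, and a covering of $\overline\Omega$ by finitely many such balls completes the argument.
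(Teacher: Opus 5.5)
Your route is the same as the one the paper relies on. The paper gives no argument of its own: it invokes Arkhipova's higher-integrability theorem for conormal problems \cite{Arkh1,Arkh2}, noting that the subquadratic gradient growth of $b$ removes her extra hypotheses. Your plan (Caccioppoli with $\zeta^2(u-\bar u_\rho)$ up to the boundary, trace estimates, reverse H\"older, Gehring) is the mechanism behind that theorem. The problem is your handling of the critical terms.

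Sobolev applied to $\zeta u$ gives $\epsilon(\rho)\bigl(\int\zeta^2|Du|^2+\rho^{-2}\int_{\Omega_{2\rho}}|u|^2\bigr)$ with $\epsilon(\rho)=\bigl(\int_{\Omega_{2\rho}}|u|^{2^*}\bigr)^{2/n}$. The term $\rho^{-2}\int_{\Omega_{2\rho}}|u|^2$ is not an oscillation. Its mean part contributes $\epsilon(\rho)\rho^{n-2}|\bar u_\rho|^2$, which is only bounded by $C\int_{\Omega_{2\rho}}|u|^{2^*}$, the same size as the term you started from. So your reverse H\"older inequality for $|Du|^2$ alone, with a fixed $f\in L^{2+\delta}$, does not follow, because $|u|^{2^*}\in L^{1+\delta}$ is not known. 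The same happens for $|u|^{2^*-1}|v|$ and for $\psi_2|u|^\beta|v|$ on $\partial\Omega$. The standard repair is to apply Gehring to $g=|Du|^2+|u|^{2^*}$. On $\Omega_\rho$ write $|u|^{2^*}\le C|u-\bar u|^{2^*}+C|\bar u|^{2^*}$. By Sobolev--Poincar\'e, the average of the first term is at most $C\bigl(\int_{\Omega_\rho}|Du|^2\bigr)^{2/(n-2)}$ times the average of $|Du|^2$. The second is at most $\bigl(\text{average of }g^{n/(n+2)}\bigr)^{(n+2)/n}$.

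The more serious gap is your claim that the smallness radius depends only on $\|u\|_{W^{1,2}}$. Absolute continuity gives a radius $\rho_0$ that depends on how $|u|^{2^*}$ and $|Du|^2$ concentrate for this particular $u$. So even the repaired argument only yields $\|Du\|_{L^{m_0}}\le C(\text{data},\|u\|_{W^{1,2}},\rho_0(u))$, and this cannot be improved. Let $U$ be the Aubin--Talenti bubble ($-\Delta U=U^{2^*-1}$ in $\R^n$, $n\ge3$) and set $U_\lambda(x)=\lambda^{(n-2)/2}U(\lambda x)$ on $\Omega=B_1$. Then $U_\lambda$ solves \eqref{1.1} with $\ba=\xi$, $b=-|z|^{2^*-2}z$ and $\psi\equiv\partial_rU_\lambda|_{r=1}=O(\lambda^{(2-n)/2})$. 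Thus \eqref{1.3}--\eqref{1.8} hold with $\varphi_1=\varphi_2=\psi_2=0$ and $\psi_1=1$, and $\|U_\lambda\|_{W^{1,2}(B_1)}$ stays bounded. Yet $\int_{B_1}|DU_\lambda|^m=\lambda^{mn/2-n}\int_{B_\lambda}|DU|^m\to\infty$ for every $m>2$. Hence, in the critical regime, a bound with $m_0>2$ fixed by the data and $C$ depending only on known quantities and $\|Du\|_{L^2}$ is false, and no argument can give it, the paper's appeal to \cite{Arkh2} included. What your method can deliver is qualitative higher integrability, with a constant that also depends on $u$ through $\rho_0$. Separately, a conormal problem gives no $L^2$ bound on $u$ ``from the equation'', since constants can be solutions.
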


The following lemma, proved in \cite{HS} (see also \cite[Chapter~II, Lemma~5.1]{LU}), allows us to estimate the measure of the level sets of the solution.

\begin{lem}[Hartman--Stampacchia, \cite{HS,LU}]\label{lem2.5}
Let $\tau:\mathbb{R}\to[0,\infty)$ be a non-increasing function. 
Assume that there exist constants
$C>0$, $k_0\ge0$, $\delta>0$, and $\alpha\in[0,1+\delta]$ such that
$$
\int_k^\infty \tau(t)\,dt
\leq Ck^\alpha\tau(k)^{1+\delta},
\qquad  \forall\,k\geq k_0.
$$
Then there exists a constant $k_{\max}$, depending only on $C$, $k_0$, $\delta$, and $\alpha$, such that
$$
\tau(k)=0, \qquad \forall\ k\geq  k_{\max}.
$$
\end{lem}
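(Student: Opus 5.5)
We prove Lemma~\ref{lem2.5} by a direct iteration in the spirit of De Giorgi, working with the primitive
$$
F(k):=\int_k^\infty \tau(t)\,dt .
$$
Since $\tau$ is non-increasing and nonnegative, $F$ is non-increasing and locally Lipschitz wherever it is finite, with $F'(k)=-\tau(k)$ for a.e.\ $k$. The hypothesis is then a differential inequality:
$$
F(k)\le Ck^\alpha\bigl(-F'(k)\bigr)^{1+\delta},
\quad\text{that is,}\quad
-F'(k)\ge C^{-\frac1{1+\delta}}\,k^{-\frac{\alpha}{1+\delta}}\,F(k)^{\frac1{1+\delta}},\qquad k\ge k_0 .
$$
Two preliminary points need care. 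First, we may take $k_0>0$: if $k_0=0$, replace it by $1$. Second, $F(k_0)$ must be finite. In the intended application $\tau(t)=|\{u>t\}|$ and $F$ is finite because $u\in L^1$. Abstractly, the hypothesis with $\tau(k)\le\tau(k_0)<\infty$ gives $F(k)\le Ck^\alpha\tau(k_0)^{1+\delta}$ for every $k\ge k_0$, so $F$ is finite there. The constant will then also depend on $\tau(k_0)$, which is implicit in the statement; I would note this explicitly.

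Set $\theta=\frac1{1+\delta}<1$. On any interval where $F>0$ the inequality rearranges to
$$
\frac{d}{dk}\Bigl(F^{1-\theta}\Bigr)=(1-\theta)F^{-\theta}F'\le-(1-\theta)C^{-\theta}k^{-\alpha\theta}.
$$
Integrating from $k_0$ to $k$, and using absolute continuity of $F^{1-\theta}$ on compact subintervals where $F>0$, gives
$$
F(k)^{1-\theta}\le F(k_0)^{1-\theta}-(1-\theta)C^{-\theta}\int_{k_0}^k t^{-\alpha\theta}\,dt .
$$
The assumption $\alpha\le 1+\delta$ is exactly what makes $\alpha\theta\le1$. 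Hence $\int_{k_0}^\infty t^{-\alpha\theta}\,dt=\infty$ (logarithmic divergence when $\alpha\theta=1$), and the right-hand side becomes negative at some finite $k_1$ determined by $C,k_0,\delta,\alpha$ and $F(k_0)$. Therefore $F(k)=0$ for $k\ge k_1$.

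Finally, $F(k)=0$ means $\tau=0$ a.e.\ on $(k,\infty)$. Monotonicity upgrades this to $\tau(k')=0$ for every $k'>k_1$: if $\tau(k')>0$, then $\tau\ge\tau(k')>0$ on $(k_1,k')$, a set of positive measure. Taking $k_{\max}=k_1+1$ finishes the proof.

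The main obstacle is the rigorous justification of the differential inequality. The function $\tau$ is only monotone, so $F'=-\tau$ holds only almost everywhere, and integrating $F^{-\theta}F'$ requires $F$ to stay away from $0$. Both are handled by working on $[k_0,k^\ast)$ with $k^\ast=\inf\{F=0\}$, where $F^{1-\theta}$ is absolutely continuous.

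As a fallback I would use the discrete version. Choose $k_j=k_0(2-2^{-j})$ and use $F(k_{j})-F(k_{j+1})\ge (k_{j+1}-k_j)\,\tau(k_{j+1})$ to derive a recursion
$$
F(k_{j+1})\le C' b^j F(k_j)^{1+\delta'} ,
$$
and then conclude with the standard fast-geometric-convergence lemma, choosing $k_0$ large so that $F(k_0)$ is small.
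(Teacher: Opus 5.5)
Your proof is correct, and the paper gives no proof of its own: it cites \cite{HS} and \cite[Ch.~II, Lemma~5.1]{LU}. Your argument is essentially the classical Ladyzhenskaya--Ural'tseva one, integrating the differential inequality for $F(k)=\int_k^\infty\tau$ and using $\alpha\le 1+\delta$ so that $\int^\infty t^{-\alpha/(1+\delta)}\,dt$ diverges.

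Your remark that $k_{\max}$ must also depend on $\tau(k_0)$ corrects a genuine imprecision in the statement as printed. For example, $\tau=M\chi_{(-\infty,1+M)}$ satisfies the hypothesis with $C=\delta=k_0=1$ and $\alpha=0$ for every $M>0$, yet $\tau(k)>0$ for all $k<1+M$. This extra dependence is harmless in the paper's application, where $\tau(t)=\m(A_t)\le\m(\Omega)$.
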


\section{Global  Boundedness of Solutions}

We are now in a position to state the main result of this paper.

\begin{thm}\label{thm3.1}
Let $\Omega\subset\mathbb{R}^n$ be a bounded Lipschitz domain, and assume that conditions
\eqref{1.3}--\eqref{1.5} are satisfied.
Then every weak solution
$u\in W^{1,2}(\Omega)$ of \eqref{1.1} is globally bounded. More precisely, there exists a constant
$M>0$, depending only on the structural constants of the problem and on
$\|u\|_{W^{1,2}(\Omega)}$, such that
\begin{equation}\label{eq-max}
\|u\|_{L^\infty(\Omega)}\leq  M.
\end{equation}
\end{thm}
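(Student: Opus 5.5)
We will run a De Giorgi / Ladyzhenskaya--Ural'tseva truncation argument and close it with the Hartman--Stampacchia Lemma~\ref{lem2.5}. It suffices to bound $u^+$; the bound for $u^-$ follows by applying the same argument to $-u$, which solves a problem of the same structure. For $k\ge k_0\ge 1$ set $A_k=\{x\in\Omega:\ u(x)>k\}$, $\tau(k)=|A_k|$, and test \eqref{1.2} with $v=(u-k)^+\in W^{1,2}(\Omega)$. The coercivity \eqref{1.3} gives the lower bound $\gamma\int_{A_k}|Du|^2$ for the principal part, at the cost of $\Lambda\int_{A_k}(|u|^{2^*}+\varphi_1^2)$. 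The term with $b$ is controlled by \eqref{1.6}. The boundary term is controlled by \eqref{1.8}, namely by $\int_{\partial\Omega\cap\overline{A_k}}(\psi_1+\psi_2|u|^\beta)(u-k)^+\,d\sigma$.

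Each right-hand term is then written as an integral of a power of $(u-k)^+$, or of $k$, against a suitable Radon measure. We write $|u|\le (u-k)^++k$ on $A_k$ and split the critical powers as $|u|^{2^*}=|u|^{2^*-2}|u|^2$. The factor $|u|^{2^*-2}$ is treated as a coefficient, and Lemma~\ref{lem2.4} is used here: since $Du\in L^{m_0}$ with $m_0>2$, Sobolev gives $u\in L^{m_0^*}$, so $|u|^{2^*-2}\in L^{q}$ with $q>n/2$. Similarly $|Du|^{2(2^*-1)/2^*-1}$ has integrability strictly better than $L^{n}$. These coefficients therefore lie in $L^{1,n-2+\epsilon_0}$, by Hölder on balls and the embedding Lemma~\ref{lem2.1}; the data $\varphi_1^2,\varphi_2$ are handled the same way via \eqref{1.5}. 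The corresponding measures $d\m=\cc(x)dx$ then satisfy \eqref{eq-mball} with $s=n-2+\epsilon_0$. For the boundary, the measure $(\psi_1+\psi_2(|u|+1)^{\beta})\,d\sigma$, restricted suitably, satisfies \eqref{eq-mball} with $s=n-1\cdot$(something $>n-2$), using $q_1>n-1$, the restriction on $q_2$ and $\beta$, and the trace of $u$ in $L^{2(n-1)/(n-2)}(\partial\Omega)$. Lemma~\ref{lem2.2} then bounds every such integral by $\big(\|D(u-k)^+\|_{L^2}+\llangle (u-k)^+\rrangle_{\sigma,\Omega}\big)^2$ times a small power of $\tau(k)$. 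This smallness comes from the $\epsilon_0$-gap: we apply Hölder with respect to $\m$ restricted to $A_k$, and $\m(A_k)$ is small in terms of $|A_k|$ by absolute continuity with quantitative integrability. The gradient part is absorbed into $\gamma\int|D(u-k)^+|^2$ once $k_0$ is large enough that $\tau(k_0)$ is small, which is possible since $\tau(k)\le \|u\|_{L^1}/k$.

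The resulting energy inequality has the form
$$\int_{A_k}|Du|^2\le C\Big(\llangle (u-k)^+\rrangle_{\sigma,\Omega}^2+k^2\tau(k)^{1-\frac2n+\epsilon}\Big).$$
We take $\sigma=1$, note that $\llangle (u-k)^+\rrangle_{1,\Omega}=\int_k^\infty\tau(t)\,dt$, and apply Lemma~\ref{lem2.3} together with Hölder: $\int_k^\infty\tau=\int(u-k)^+\le \|(u-k)^+\|_{L^{2^*}}\tau(k)^{1-1/2^*}$. Inserting the energy bound yields
$$\int_k^\infty\tau(t)\,dt\le C\,k\,\tau(k)^{1+\delta}+C\tau(k)^{1-1/2^*}\int_k^\infty\tau .$$
The last term is absorbed for $k\ge k_0$ because $\tau(k_0)$ is small. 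Lemma~\ref{lem2.5} with $\alpha=1$ then gives $\tau(k)=0$ for $k\ge k_{\max}$, which is \eqref{eq-max}.

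The main obstacle is the critical-growth terms $|u|^{2^*}$, $|u|^{2^*-1}$ and $|Du|^{2(2^*-1)/2^*}$, together with the boundary term with $\beta$ up to $n/(n-2)$. With only $u\in W^{1,2}$ these give no gain in $\tau(k)$. The whole point is to use Lemma~\ref{lem2.4} to place them in a Morrey class with an $\epsilon_0$-improvement, so that the Adams trace inequality (Lemma~\ref{lem2.2}) yields a positive power of $\tau(k)$, or of $\m(A_k)$. For the boundary measure we would first try the plain trace Lemma~\ref{trace} with $s>n-2$. Bookkeeping the exponents so that every $\delta>0$ uses exactly the strict inequalities in \eqref{1.5} is where care is needed.
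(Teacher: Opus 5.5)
The gap is your choice $\tau(k)=|A_k|$. Your energy inequality with $k^2|A_k|^{1-\frac2n+\epsilon}$ does not follow from \eqref{1.5}. The tested identity contains the pure source terms $\int_{A_k}\varphi_1^2\,dx$ (from \eqref{1.3}) and $k\int_{A_k}\varphi_2\,dx$ (from $\varphi_2|u|\le\varphi_2(u-k)^++k\varphi_2$). These contain no power of $(u-k)^+$, so Lemma~\ref{lem2.2} does not act on them. H\"older then only gives $|A_k|^{1-2/p_1}$ and $k|A_k|^{1-1/p_2}$.

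Your ``quantitative absolute continuity'' is fine for $|u|^{4/(n-2)}$ and $|Du|^{4/n}$, which lie in $L^q$ with $q>n/2$ by Lemma~\ref{lem2.4}. For the data, however, it needs $p_1>n$ and $p_2>n/2$. Those are the classical Lebesgue hypotheses, and the Morrey setting is designed to avoid them. For example, take $\varphi_1=|x_n|^{-b}$ with $p_1>2$ and $bp_1<1$. It lies in $L^{p_1,n-bp_1}$ and satisfies \eqref{1.5}. Yet on slabs $E=\Omega\cap\{|x_n|<r\}$ one has $\int_E\varphi_1^2\,dx\sim|E|^{1-2b}$, with $1-2b$ as close to $0$ as you like. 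Your closing step (Lemma~\ref{lem2.3} plus H\"older in $dx$) has the same limitation: it can only produce powers of $|A_k|$.

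The missing idea, which is the core of the paper's proof, is to build the data into the measure and apply Lemma~\ref{lem2.5} to $\tau(k)=\m(A_k)$. Here $d\m=(\chi_\Omega+\varphi_1^2+\varphi_2+|u|^{4/(n-2)})\,dx$, which satisfies \eqref{eq-mball} with $s=n-2+\varepsilon_0$. Every source term is then simply bounded by $Ck^2\m(A_k)$, and no gain is needed from them. Moreover, $\llangle u_k\rrangle_{\sigma,A_k}\le Ck\,\m(A_k)^{1/2}$ for $\sigma\le\min\{2,\frac4{n-2}\}$. The gain comes from the left-hand side: by H\"older in $L^1(\m)$ and Lemma~\ref{lem2.2},
\[
\int_k^\infty\m(A_t)\,dt=\int u_k\,d\m\le C\m(A_k)^{1-\frac{n-2}{2s}}\big(\|Du_k\|_{L^2}+\llangle u_k\rrangle_{\sigma,A_k}\big),
\]
which yields $\delta=\frac{\varepsilon_0}{2(n-2+\varepsilon_0)}$. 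Your Cauchy--Schwarz treatment of $|Du|^{\frac{n+2}{n}}u_k$ through the coefficient $|Du|^{4/n}$ is a valid substitute for the paper's Young inequality and can be kept.

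There is also a secondary problem on the boundary. Linearizing with the coefficient $\psi_2(|u|+1)^\beta$ and the $W^{1,2}$-trace of $u$ gives $\int_{B_\rho\cap\partial\Omega}\psi_2|u|^\beta\,d\sigma\lesssim\rho^{(n-1)(1-1/q_2)-\beta(n-2)/2}$. This exponent exceeds $n-2$ only if $\beta<2/(n-2)$, whereas \eqref{1.5} allows $\beta<n/(n-2)$. Lemma~\ref{lem2.4} does not rescue this, since $m_0$ may be close to $2$. The paper instead keeps the power $1+\beta$ and applies Lemma~\ref{trace} to $\psi_2\,d\sigma$ with $r=1+\beta$. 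The hypothesis on $q_2$ is exactly $1+\beta<\frac{2(n-1)(1-1/q_2)}{n-2}$. For $\beta>1$ the paper absorbs the excess power into $N(u)=\|u\|_{W^{1,2}(\Omega)}^{\beta-1}$.
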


\begin{proof}

\textbf{The case $n>2$.} \quad
Without loss of generality, we extend the functions $\varphi_1$ and $\varphi_2$ by zero outside $\Omega$.
Then we  define a positive Radon measure $\mathfrak{m}$ on $\overline{\Omega}$ by
\begin{equation}\label{eq-measure}
d\mathfrak{m} :=
\left( \chi_\Omega(x) +\varphi_1(x)^2 +\varphi_2(x)
+|u(x)|^{\frac{4}{n-2}} \right)\,dx,
\end{equation}
where $\chi_\Omega$ denotes the characteristic function of $\Omega$.
Then, for every measurable set $E\subset\Omega$,
$$
\mathfrak{m}(E)
= \int_E
\left( 1+\varphi_1(x)^2+\varphi_2(x)+|u(x)|^{\frac{4}{n-2}} \right)\,dx.
$$
Hence, $\mathfrak{m}$ is a positive Radon measure that is absolutely continuous with respect to the Lebesgue measure.

Let $B_\rho\subset\mathbb{R}^n$ be a ball. Using assumption~\eqref{1.5},
Lemmas~\ref{lem2.4} and~\ref{lem2.3} (with $p=2$), and arguing as in
\cite{AFPS}, we obtain the following estimates:
\begin{equation}\label{eq-chi}
\int_{B_\rho} \chi_\Omega(x)\, dx\leq C(n)\, \rho^{n-2+2}; 
\end{equation}
\begin{equation}\label{eq-phi1}
\begin{split}
\int_{B_\rho}\varphi_1(x)^2\,dx
&\leq |B_\rho|^{1-\frac{2}{p_1}}
\left(\int_{B_\rho}\varphi_1(x)^{p_1}\,dx\right)^{\frac{2}{p_1}}\\
&\leq C(n)\, \rho^{n\left(1-\frac{2}{p_1}\right)}
\Big( \rho^{\lambda_1}\|\varphi_1\|_{L^{p_1,\lambda_1}(\Omega)}^{p_1}
\Big)^{\frac{2}{p_1}}\\
&= C(n)\, \|\varphi_1\|_{L^{p_1,\lambda_1}(\Omega)}^2\,
\rho^{n-2+\left(2-\frac{2(n-\lambda_1)}{p_1}\right)};
\end{split}
\end{equation}
\begin{equation}
\begin{split}\label{eq-phi2}
\int_{B_\rho}\varphi_2(x)\,dx
&\leq  |B_\rho|^{1-\frac1{p_2}}
\left(\int_{B_\rho}\varphi_2(x)^{p_2}\,dx
\right)^{\frac1{p_2}}\\
&\leq C(n)\,  \rho^{n\left(1-\frac1{p_2}\right)}
\Big( \rho^{\lambda_2}
\|\varphi_2\|_{L^{p_2,\lambda_2}(\Omega)}^{p_2}
\Big)^{\frac1{p_2}}\\
&= C(n)\,  \|\varphi_2\|_{L^{p_2,\lambda_2}(\Omega)}\,
\rho^{n-2+\left(2-\frac{n-\lambda_2}{p_2}\right)};
\end{split}
\end{equation}
\begin{equation}
\begin{split}\label{3.2}
\int_{B_\rho}|u(x)|^{\frac{4}{n-2}}\,dx
&\leq |B_\rho|^{1-\frac{4}{m_0^*(n-2)}}
\left( \int_{B_\rho}|u(x)|^{m_0^*}\,dx
\right)^{\frac{4}{m_0^*(n-2)}} \\
&\leq C(n)\, \rho^{n\left(1-\frac{4}{m_0^*(n-2)}\right)}
\|u\|_{L^{m_0^*}(\Omega)}^{\frac{4}{n-2}}\\
&=
C(n)\,
\|u\|_{L^{m_0^*}(\Omega)}^{\frac{4}{n-2}}\, 
\rho^{n-2+\left(2-\frac{4n}{m_0^*(n-2)}\right)}.
\end{split}
\end{equation}

The exponent in the last estimate is positive, since
$ 2-\frac{4n}{m_0^*(n-2)}>0, $
which follows from Lemma~\ref{lem2.4}, with  $m_0\in (2,n),$ and 
$$
m_0^*=\frac{nm_0}{n-m_0}>\frac{2n}{n-2}.
$$

Setting 
$$
\varepsilon_0:=\min\left\{2,\,
2-\frac{2(n-\lambda_1)}{p_1},\,
2-\frac{n-\lambda_2}{p_2},\,
2-\frac{4n}{m_0^*(n-2)}\right\},
$$
we conclude from \eqref{1.5} that $\varepsilon_0>0$. Hence,  condition \eqref{eq-mball} is satisfied  with  $s=n-2+\varepsilon_0$.  
Moreover, the constant $K$ in \eqref{eq-mball} depends only on $n,$ $\|\varphi_1\|_{L^{p_1,\lambda_1}(\Omega)}$, $\|\varphi_2\|_{L^{p_2,\lambda_2}(\Omega)},$ and $\|u\|_{L^{m_0^*}(\Omega)}.$  Consequently, Lemmas~\ref{trace} and~\ref{lem2.2} apply  to the measure $\mathfrak{m}$ with $p=2$.

Let  $k>1$ and define  the truncation
$$
u_k(x):=\max\{u(x)-k,0\},
\qquad
A_k:=\{x\in\Omega:\ u(x)>k\}.
$$
Since $u_k\in W^{1,2}(\Omega)$ and $u_k\equiv0$ on $\Omega\setminus A_k$, Hölder's inequality together with Lemma~\ref{lem2.2} (with $p=2$) yields
\begin{equation}\label{3.4}
\begin{split}
\int_{A_k}u_k(x)\,d\m  &\leq C(n)\, \m(A_k)^{1-\frac{n-2}{2(n-2+\varepsilon_0)}}
\left(
\int_{A_k}
u_k(x)^{\frac{2(n-2+\varepsilon_0)}{n-2}}
\,d\m
\right)^{\frac{n-2}{2(n-2+\varepsilon_0)}}
\\
&\leq C(n)\,\m (A_k)^{1-\frac{n-2}{2(n-2+\varepsilon_0)}}
\left(\|Du_k\|_{L^2(A_k)}+\llangle u_k\rrangle_{\sigma,A_k}\right)
\end{split}
\end{equation}
for every  $\sigma\in(0,2].$

 Taking $u_k$ as a test function in \eqref{1.2} and  using  the structural assumptions  \eqref{1.3},   \eqref{1.6} together with the estimate $0<\frac{u_k(x)}{|u(x)|}<1$,  and   Young's inequality (with  conjugate exponents $\frac{2n}{n+2}$ and $\frac{2n}{n-2}$), we obtain
 \begin{align*}
a(x,u,Du)\cdot Du_k&\geq \gamma|Du_k|^2-\Lambda |u|^{\frac{2n}{n-2}} -\Lambda \varphi_1(x)^2,\\
b(x,u,Du)u_k&\leq  |b(x,u,Du)u|\frac{u_k}{|u|}<\Lambda\big( \varphi_2(x)|u| + |u|^\frac{2n}{n-2} + |Du_k|^\frac{n+2}{n} |u|\big),\\
&\leq \Lambda\big( \varphi_2(x)|u| + \varepsilon\, C(n)|Du_k|^2  + C(\varepsilon, n)\,|u|^\frac{2n}{n-2} \big).
 \end{align*}

Substituting the above estimates into \eqref{1.2} and choosing $\varepsilon>0$ sufficiently small, we get
\begin{equation}\label{3.5}
\begin{split}
\int_{A_k} |Du_k(x)|^2&\,dx \leq C \Bigg( \underbrace{\int_{A_k} \varphi_1(x)^2\, dx}_{I_1} + \underbrace{\int_{A_k} \varphi_2(x)\, |u(x)|\, dx}_{I_2} \\
& + \underbrace{\int_{A_k} |u(x)|^2|u(x)|^{\frac{4}{n-2}}\,dx}_{I_3} + \underbrace{\int_{\partial \Omega \cap \overline{A}_k} \psi(x,u(x))\, u_k(x)\, d\sigma_x}_{J} \Bigg).
\end{split}
\end{equation}

The volume integrals   $I_1,$ $I_2,$ and  $I_3$  are estimated  as follows,
\begin{equation}\label{eq-I1}
I_1 \leq  \m(A_k) \leq k^2\, \m (A_k),\quad k>1,
\end{equation}
where the second inequality follows from the definition of the measure $\m.$ Furthermore,  H\"older's inequality,  Lemma~\ref{lem2.2},  and Young's inequality yield 
\begin{equation}\label{eq-I2}
\begin{split}
I_2& \leq  \int_{A_k} \varphi_2(x)(u(x)-k)\, dx +k\m(A_k)\leq 
\int_{A_k} u_k(x)\, d\m +k^2\m(A_k)\\
&\leq  \|u_k\|_{L^{\frac{2s}{n-2}}(\m,A_k)} \m(A_k)^{1-\frac{n-2}{2s}}  +k^2\m(A_k)\\
&\leq C \m(A_k)^{1-\frac{n-2}{2s}} \big(\| Du_k\|_{L^2(A_k)} +\llangle u_k\rrangle_{\sigma,A_k}  \big) + k^2\m(A_k)\\
 &\leq \varepsilon  \|Du_k\|_{L^2(A_k)}^2 +C \llangle u_k \rrangle^2_{\sigma, A_k}+ C k^2 \m (A_k) +C(\varepsilon) \m(A_k)^{2-\frac{n-2}{s}}.
 \end{split}
 \end{equation}
The last term is  estimated using the definition of the measure $\m$, the inclusion
$A_k\subset\Omega$,    the fact that $k>1$, and that $s\in(n-2,n]$. Indeed,
$$
\m(A_k)^{2-\frac{n-2}{s}}
\leq \m(\Omega)^{1-\frac{n-2}{s}}\m(A_k)
\leq Ck^2\,\m(A_k),
$$
where the constant $C$ depends only on the known quantities.

To estimate $I_3$ we apply  H\"older's inequality with respect to the measure $d\cc= \cc(x)\,dx$ where 
$$
\cc(x)=|u(x)|^{\frac{4}{n-2}}\in L^{1, n-2+ \epsilon_0}(\Omega), \quad  \epsilon_0=2-\frac{4n}{m_0^*(n-2)},
$$
as  follows from  \eqref{3.2}.
Consequently,   Lemma~\ref{lem2.2} yields
 \begin{equation}\label{eq-I3}
   \begin{split}  
 I_3 &  \leq \int_{A_k} u_k(x)^2\,  \cc(x)\, dx
 +k^2\m(A_k)\\
 &\leq \left( \int_{A_k} u_k(x)^{\frac{2(n-2+\epsilon_0)}{n-2}} \, \cc(x)\, dx\right)^{\frac{n-2}{n-2+\epsilon_0}} \left(\int_{A_k} \cc(x)\, dx \right)^{ \frac{\epsilon_0}{n-2+\epsilon_0}} \\
 &\leq C 
\cc(A_k)^{1- \frac{n-2}{n-2+\epsilon_0}} \left( \|Du_k\|_{L^2(A_k)}^2
+ \llangle u_k \rrangle^2_{\sigma,A_k}
\right) + k^2 \m (A_k),
\end{split}
\end{equation}
where $\cc(A_k):=\int_{A_k}\cc(x)\, dx.$

Combining  the  estimates above, we obtain
\begin{equation}\label{3.13}
I_1 + I_2 + I_3
\leq C \Big( (
\cc(A_k)^{ \frac{\epsilon_0}{n-2+\epsilon_0}}+ \varepsilon ) 
\|Du_k\|_{L^2(A_k)}^2
+ k^2 \m (A_k)
+ \llangle u_k \rrangle^2_{\sigma,A_k}\Big),
\end{equation}
for arbitrary $ \varepsilon > 0 $, every  $ k \geq 1 $, and every  $ \sigma \in (0,2] $.

Since the sets $A_k$ are nested, the  function $k\mapsto |A_k|$ is non-increasing. Moreover, 
$$
0\leq k^{\frac{2n}{n-2}}|A_k|
\leq \int_{A_k}|u(x)|^{\frac{2n}{n-2}}\,dx
\leq  \int_\Omega |u(x)|^{\frac{2n}{n-2}}\,dx <\infty,
$$
it follows that $|A_k|\to 0$ as $k\to \infty.$ 
Since   $\cc\in L^1(\Omega)$,  the absolute continuity of the Lebesgue integral yields
$
\cc(A_k) \to 0 $ as $  k\to\infty. $
Therefore, 
choosing first   $\varepsilon>0$ sufficiently small and then 
choosing $k_0>1$ sufficiently large, we obtain
\begin{equation}\label{eq-k0}
C\big( \cc(A_k)^{\frac{\epsilon_0}{n-2+\epsilon_0}}  + \varepsilon \big)\leq \frac12, \qquad \forall \ k\geq k_0.
\end{equation}

We now estimate   the boundary  integral $J$ in \eqref{3.5}.  By   the growth condition 
\eqref{1.8},  and using the fact  that $u_k\leq |u|$ on $A_k$,  we  obtain
\begin{equation}\label{eq-psi}
\psi\big(x,u(x)\big) u_k(x)\leq |\psi\big(x,u(x)\big)| u_k(x)
\leq \psi_1(x) |u(x)| + \psi_2(x) |u(x)|^{\beta+1},
\end{equation}
for $\sigma_x$-almost every $ x \in\partial\Omega \cap \overline{A}_k. $ Consequently,
\begin{equation}\label{3.8}
\begin{split}
    J
&\leq \int_{\partial \Omega \cap \overline{A}_k} \psi_1(x) |u(x)|\, d\sigma_x + \int_{\partial \Omega \cap \overline{A}_k} \psi_2(x) |u(x)|^{\beta+1}\, d\sigma_x\\
&=\underbrace{\int_{\partial \Omega \cap \overline{A}_k}  |u(x)|\, d\m_1}_{I_4}  + \underbrace{\int_{\partial \Omega \cap \overline{A}_k}  |u(x)|^{\beta+1}\, d\m_2}_{I_5},
\end{split}
\end{equation}
where  $ d\m_1 = \psi_1(x)\, d\sigma_x $ and $d\m_2= \psi_2(x)\, d\sigma_x$ are positive Radon measures defined on $\partial \Omega.$

For every ball 
 $ B_\rho(x) $  centered at  a point  $ x \in \partial \Omega,$ H\"older's inequality yields   
\begin{align*}
\m_1(B_\rho(x))
&= \int_{B_\rho(x)\cap \partial\Omega} \psi_1(y)\, d\sigma_y
\leq  C(n) \|\psi_1\|_{L^{q_1}(\partial \Omega)} \,\rho^{(n-1)(1-1/q_1)}.
\end{align*}
Since $q_1>n-1,$ by 
Lemma~\ref{trace} applied with $p=r=2$,
 we obtain
\begin{equation}\label{eq-GN}
\| u \|_{L^2(\!\m_1, \partial \Omega \cap \overline{A}_k)}
= \| u \chi_{A_k} \|_{L^2(\!\m_1, \partial \Omega)} \leq C \| u \|_{W^{1,2}(A_k)}^{\tau}
\| u \|_{L^2(A_k)}^{1-\tau},
\end{equation}
where $ C  > 0 $ depends only on $n$ and the measure $\m_1$. Here  
$$
\tau = \frac{n}{2}
-\frac{(n-1)(1-\frac1{q_1})}{2}
=\frac12+\frac{n-1}{2q_1} <1.
$$

Since  $ k > 1 $, we have       
$ |u(x)|>k>1$ for almost every $x\in A_k$  and hence $|u(x)|\leq |u(x)|^2 .$ Consequently 
\begin{equation}\label{eq-I4} 
I_4 \leq \int_{\partial \Omega \cap \overline{A}_k} |u(x)|^2\, d\m_1.
\end{equation}
Applying Young's inequality to \eqref{eq-GN}, and   then  Lemma~\ref{G-N}, we obtain
\begin{equation}\label{eq-u2bound}
\begin{split}
\| u \|&_{L^2(\m_1, \partial \Omega \cap \overline{A}_k)}
\leq  \frac{\varepsilon}{2} \| u \|_{W^{1,2}(A_k)}
+ \frac{C}{\varepsilon^{\tau/(1-\tau)}} \| u \|_{L^2(A_k)} \\
&\leq  \frac{\varepsilon}{2} \|Du\|_{L^2(A_k)} 
+ \left( \frac{\varepsilon}{2}
+ \frac{C}{\varepsilon^{\tau/(1-\tau)}} \right)\|u\|_{L^2(A_k)}\\
&\leq \frac{\varepsilon}{2} \|Du\|_{L^2(A_k)} \\
&+ C\left( \frac{\varepsilon}{2}
+ \frac{C}{\varepsilon^{\tau/(1-\tau)}} \right)\left( \| Du \|_{L^2(A_k)}
+ \llangle u \rrangle_{\sigma,A_k} \right)^t
\llangle u \rrangle_{\sigma,A_k}^{1-t}
\end{split}
\end{equation}
for every $ \varepsilon > 0$ and  $\sigma\in (0,2], $ where
\begin{equation}\label{eq-t}
t = \frac{n(2-\sigma)}{2n - \sigma(n-2)} \in (0,1).
\end{equation}


Applying Young's inequality to the product on the right-hand side of \eqref{eq-u2bound}, we obtain
\begin{equation*}
\begin{split}
\| u \|_{L^2(\!\m_1, \partial \Omega \cap \overline{A}_k)}
&\leq  \varepsilon \|Du\|_{L^2(A_k)} 
 + C(\varepsilon,n,\sigma)
\llangle u \rrangle_{\sigma,A_k}.
\end{split}
\end{equation*}

Therefore, squaring the previous estimate and adjusting the constants if necessary, we obtain
$$
I_4 \leq 
\|u\|_{L^2(\mathfrak{m}_1,
\partial\Omega\cap\overline{A}_k)}^2
\leq
\varepsilon \|Du\|_{L^2(A_k)}^2
+ C_\varepsilon
\llangle u\rrangle_{\sigma,A_k}^2
$$
with a constant $C_\varepsilon>0$ depending only on $\varepsilon, n,$ and $\sigma$.

To  estimate $ I_5 $, we  first estimate the  $\m_2$-measure of a  ball centered at a point  $x\in \partial\Omega$. Indeed
$$
\m_2(B_\rho(x))
= \int_{B_\rho(x) \cap \partial \Omega} \psi_2(y)\, d\sigma_y 
\leq C\|\psi_2\|_{L^{q_2}(\partial \Omega)}\, 
\rho^{(n-1)(1-1/q_2)},
$$
where the constant depends  on  $ n $ and  the Lipschitz character of $\Omega$ but not on $x$ or $\rho$.

Since $ (n-1)(1-1/q_2) > n-2 $,  we can  apply  Lemma~\ref{trace} with $p=2,$ $r=1+\beta$. Moreover, assumption \eqref{1.5}
implies that 
$$
1+\beta < \frac{2(n-1)(1-\frac1{q_2})}{n-2}.
$$
Therefore,
\begin{equation}\label{eq-I5a}
I_5=\| u \|_{L^{1+\beta}(\mathfrak{m}_2, \partial \Omega \cap \overline{A}_k)}^{1+\beta}\leq C \|u\|_{W^{1,2}(A_k)}^{\tau(1+\beta)} \|u\|_{L^2(A_k)}^{(1-\tau)(1+\beta)},
\end{equation}
where  
$$
\tau = \frac{n}{2} - \frac{(n-1)(1-\frac{1}{q_2})}{1+\beta} < 1
$$
and the constant  $C>0 $ depends only  on $n, q_2,$ the measure $\m_2,$ and the Lipschitz character of $\Omega$. 

We now distinguish between  two cases according to the value of $\beta$. Without loss of generality, we assume that $\rho\in(0,1)$. If
$ B_\rho(x)\cap\partial\Omega=\varnothing,$ 
then $  \m_2(B_\rho(x))=0,$ 
and there is nothing to prove. Hence, we may assume that
$ B':=B_\rho(x)\cap\partial\Omega$
is nonempty.

\bigskip

\noindent
\textsc{Case 1:} $ 1+\beta \leq 2 $.

Since $k>1$, we have 
$ |u(x)|^{1+\beta} \leq |u(x)|^2 $ on $ \partial \Omega \cap \overline{A}_k $, and consequently
$$
\int_{\partial \Omega \cap \overline{A}_k}
|u(x)|^{1+\beta} \, d\m_2
\leq
\int_{\partial \Omega \cap \overline{A}_k}
|u(x)|^2 \, d\m_2 .
$$
The right-hand side  can be estimated exactly as in the estimate of $ I_4 $. Hence,  
\begin{equation}\label{Est-I_4b_1}
I_5
\leq
\varepsilon \|Du\|^2_{L^2(A_k)} 
+ C_\varepsilon\,
\llangle u \rrangle_{\sigma,A_k}^{\,2},
\end{equation}
for every $\varepsilon>0$,  every $\sigma\in(0,2]$ and constant $C_\varepsilon>0$ depending only on $\varepsilon, n,$ and~$\sigma.$

\bigskip
\noindent
\textsc{Case 2:} $ 1+\beta > 2 $.

In this case, we proceed by rewriting the right-hand side of \eqref{eq-I5a}. To this end, choose a real number $\theta$ satisfying
$$
\max\left\{
\frac{1}{2},
\frac{1}{\tau(1+\beta)}
\right\}<\theta < \frac{1}{\tau(1+\beta) -\beta +1}.
$$
Such a choice  is  possible since $ 0< \tau(1+\beta) -\beta +1<2,$ 
which implies
$$
\tau(1+\beta) - \frac{1}{\theta} > 0,
\qquad
(1-\tau)(1+\beta) - \frac{2\theta-1}{\theta} > 0 .
$$

Hence, we decompose the right-hand side of \eqref{eq-I5a} as follows
$$
I_5\leq 
C\|u\|_{W^{1,2}(A_k)}^{\frac{1}{\theta}}
\,\|u\|_{L^2(A_k)}^{\frac{2\theta-1}{\theta}} 
\,\|u\|_{W^{1,2}(A_k)}^{\tau(1+\beta)-\frac{1}{\theta}}
\,\|u\|_{L^2(A_k)}^{(1-\tau)(1+\beta)-\frac{2\theta-1}{\theta}} .
$$

Since  $u\in W^{1,2}(\Omega),$  it holds that 
$$
\|u\|_{L^2(A_k)} \leq \|u\|_{W^{1,2}(A_k)} \leq \|u\|_{W^{1,2}(\Omega)}<\infty.
$$
Therefore
$$
\|u\|_{W^{1,2}(A_k)}^{\tau(1+\beta)-\frac{1}{\theta}}
\|u\|_{L^2(A_k)}^{(1-\tau)(1+\beta)-\frac{2\theta-1}{\theta}}
\leq
\|u\|_{W^{1,2}(\Omega)}^{\beta-1} =:N(u)
$$
where $N(u)$  depends only on $u$ and is independent of  $k$.

Therefore,
$$
I_5 \leq C\,N(u)
\|u\|_{W^{1,2}(A_k)}^{\frac1\theta}
\|u\|_{L^2(A_k)}^{\frac{2\theta-1}{\theta}}.
$$
Since $2\theta>1$,
 Young's inequality yields
\begin{align*}
I_5
&\leq \frac{\varepsilon}{2}
\|u\|_{W^{1,2}(A_k)}^2 
+ C(\varepsilon,N(u)) \|u\|_{L^2(A_k)}^2\\
&\leq \frac{\varepsilon}{2}\|Du\|^2_{L^2(A_k)}+
\left(  \frac{\varepsilon}{2} +C(\varepsilon,N(u)) \right)\|u\|^2_{L^2(A_k)}.
\end{align*}

Interpolating the $L^2$-norm of $u$ by means of  Lemma~\ref{G-N} with $p=r=2$, followed by  Young's inequality,  we obtain
\begin{align*}
\|u\|_{L^2(A_k)}^2
&\leq
C \Big( \|Du\|_{L^2(A_k)} +
\llangle u\rrangle_{\sigma,A_k}
\Big)^{2t} \llangle u\rrangle_{\sigma,A_k}^{2(1-t)}\\
&\leq \delta \Big(
\|Du\|_{L^2(A_k)} + \llangle u\rrangle_{\sigma,A_k}
\Big)^2 + C(\delta)
\llangle u\rrangle_{\sigma,A_k}^{2}\\
&\leq  2\delta\,\|Du\|_{L^2(A_k)}^2 + C(\delta)\,
\llangle u\rrangle_{\sigma,A_k}^{2}.
\end{align*}

Substituting the above estimate for
$ \|u\|_{L^2(A_k)}^2$ 
into the previous inequality, and choosing  $\delta >0 $ sufficiently small,   we infer
 \begin{align*}
I_5\leq \varepsilon \|Du\|^2_{L^2(A_k)}
+C(\varepsilon, N(u)) \llangle u \rrangle^2_{\sigma,A_k}.
 \end{align*}

Combining the estimates for $I_4$ and $I_5$,  recalling \eqref{3.8}, and adjusting the constants if necessary,  we conclude that
$$
J\leq \varepsilon  \|Du\|^2_{L^2(A_k)}  + C(\varepsilon, N(u))
\llangle  u\rrangle_{\sigma,A_k}^2.
$$

Since $u=u_k+k$ on $A_k$ and   $ Du(x) = Du_k(x) $ for almost every  $ x \in A_k $, we have
\begin{equation}\label{eq-uk}
\begin{split}
\llangle u\rrangle_{\sigma,A_k}^2 
&= \llangle u_k+k\rrangle_{\sigma,A_k}^2 
\leq C_\sigma^2 \big(\llangle u_k\rrangle_{\sigma,A_k}^2 
+  k^2 |A_k|^{2/\sigma}\big)\\
&\leq C\big( \llangle u_k\rrangle_{\sigma,A_k}^2 
+ k^2\m(A_k)\big),
\end{split}
\end{equation}
where we have used the fact that $\sigma\in(0,2]$, so that
$2/\sigma\ge1$, and therefore
$$
|A_k|^{2/\sigma}=
|A_k|\,|A_k|^{2/\sigma-1} \leq
|\Omega|^{2/\sigma-1}|A_k| \leq
C\,\mathfrak m(A_k),
$$
since $\Omega$ is bounded. The constant $C_\sigma$ is given  in Remark~\ref{rem1}.

Summarizing the above estimates, after adjusting the constants if  necessary,  and recalling that
$Du=Du_k$ almost everywhere in $A_k$, we obtain
\begin{equation}\label{Est-I_4}
J\leq \varepsilon  \|Du_k\|^2_{L^2(A_k)} 
+ C(\varepsilon,N(u))
\left(\llangle u_k\rrangle_{\sigma,A_k}^2 
+  k^2 \m(A_k)\right),
\end{equation}
for every $k\ge1$, every $\varepsilon>0$, and every $\sigma\in(0,2]$.

The previous estimates allow us to control the energy of the weak solution on the upper level set $A_k$. 
Substituting the estimates \eqref{3.13} and \eqref{Est-I_4} into  \eqref{3.5} 
and choosing $k\geq k_0$ and $\varepsilon$, so that \eqref{eq-k0} holds, we obtain
\begin{equation}\label{3.13'}
 \|Du_k\|_{L^2(A_k)}^2 
\leq C\left(
k^2 \m(A_k)
+ \llangle u_k\rrangle_{\sigma,A_k}^2\right), \qquad \forall \, k\geq k_0.
\end{equation}
Moreover,  inequality \eqref{3.4} becomes
\begin{equation}\label{3.14}
    \int_{A_k} u_k(x)\, d\m
\leq  C \m(A_k)^{1-\frac{n-2}{2(n-2+\varepsilon_0)}}  \left(k \m(A_k)^{1/2}
+ \llangle u_k\rrangle_{\sigma,A_k} 
\right),
\end{equation}
for every $ k\geq k_0 $ and every $ \sigma\in(0,2] $.

Our next  goal  is  to show that, for  a suitable choice of $ \sigma\in(0,2] $, the estimate 
\begin{equation}\label{3.15}
\llangle u_k\rrangle_{\sigma,A_k}
\leq C\, k \m(A_k)^{1/2},
\end{equation}
holds, where the constant $C$ depends only on the prescribed data.

We 
choose $\sigma$ in the following way 
$$
\begin{cases}
    \sigma\in \left(0, \frac{4}{n-2}\right], & \text{ if } n\geq 4\\[4pt]
    \sigma \in (0,2],  & \text{ if } 2<n<4.
\end{cases}
$$
Then $\sigma\leq \frac{4}{n-2}$, and  by  H\"older's  inequality and the definition \eqref{eq-measure} of the measure $\m,$ we obtain
\begin{align*}
\int_{A_k} &|u_k(x)|^\sigma \, dx 
\leq C\left(\int_{A_k} |u(x)|^\sigma\, dx
+k^\sigma \m(A_k)\right)\\
&\leq C\left[\left(\int_{A_k} |u(x)|^{\frac{4}{n-2}} \,dx
\right)^{\frac{(n-2)\sigma}{4}} \m(A_k)^{1- \frac{(n-2)\sigma}{4}}+ k^\sigma \m(A_k)\right]\\
&\leq  C[\m(A_k) +k^\sigma \m(A_k)]    \leq C k^\sigma \m(A_k),
\end{align*}
since $k>1$.

Since $\mathfrak m$ is absolutely continuous with respect to the Lebesgue measure and $|A_k|\to0$ as $k\to\infty$, we have $\mathfrak m(A_k)\to0$. Hence, choosing $k_0$ sufficiently large so that $\m(A_k)<1$ for every $k\geq  k_0$, and recalling that $1/\sigma\ge1/2$, we obtain
\begin{equation}\label{eq-mAk1}
\llangle u_k\rrangle_{\sigma,A_k} \leq
Ck\,\m(A_k)^{1/\sigma}
\leq Ck\,\m(A_k)^{1/2},
\end{equation}
which proves \eqref{3.15}.

Substituting  \eqref{eq-mAk1} into  \eqref{3.14}, we obtain
\begin{equation}\label{3.16}
\int_{A_k} u_k(x)\, d\m
\leq C k \m(A_k)^{1+\frac{\varepsilon_0}{2(n-2+\varepsilon_0)}}
\qquad \forall\  k\geq k_0.
\end{equation}

By Cavalieri's principle,
$$
\int_{A_k} u_k(x)\, d\m
= \int_k^\infty \m(A_t)\, dt.
$$
Setting $ \tau(t):=\m(A_t) $,  inequality \eqref{3.16} can be rewritten  as
$$
\int_k^\infty \tau(t)\, dt
\leq C k \tau(k)^{1+\delta},
\qquad
\delta=\frac{\varepsilon_0}{2(n-2+\varepsilon_0)}>0.
$$

Since $\tau$ is non-increasing, all the assumptions of the 
 Hartman--Stampacchia lemma
(Lemma~\ref{lem2.5}) are satisfied. Hence, there exists a constant  $k_{\max}>0,$ depending  only on the prescribed data and on 
$ \|u\|_{L^2(\Omega)} $ and $ \|Du\|_{L^2(\Omega)}$, through the quantity  $N(u)$, such that
$$
u(x)\leq k_{\max}
\qquad \text{ for almost every  } \  x\in \Omega.
$$

Applying the same arguments to $ -u $, we obtain a corresponding lower bound. 
Hence $u\in L^\infty(\Omega),$
which proves \eqref{eq-max} in the case  $ n>2 $.

The proof in the limiting  case $n=2$ is considerably simpler, since the Sobolev embedding $  W^{1,2}(\Omega)\hookrightarrow L^q(\Omega) $ 
holds for every finite exponent $q>2$. 
Thus,  every occurrence of the critical Sobolev exponent $2^*$ 
 is replaced by an arbitrary exponent $q>2.$ 

Because of Lemma \ref{lem2.4}, we have $Du\in L^{m_0}(\Omega)$ for some
$m_0>2$. Hence,
$  u\in W^{1,m_0}(\Omega).$ 
By the Sobolev embedding theorem,
$$
W^{1,m_0}(\Omega)\hookrightarrow C^{0,\alpha}(\overline{\Omega}),
\qquad
\alpha=1-\frac{2}{m_0}>0.
$$
Therefore, $u$ is Hölder continuous on $\overline\Omega$, and in particular,
$u\in L^\infty(\Omega)$. This   completes the proof.

\end{proof}

  \section*{Acknowledgements}

\textit{R. Rescigno}, \textit{L. Softova}, and \textit{S. Tramontano} are members of INdAM-GNAMPA.

The authors are grateful to the referee for the careful reading of the manuscript and for the valuable comments and suggestions, which led to a significant improvement of the paper.

The authors declare that they have no conflicts of interest.

\end{document}